\providecommand{\U}[1]{\protect \rule{.1in}{.1in}}
\newtheorem{theorem}{Theorem}[section]
\newtheorem{proposition}[theorem]{Proposition}
\newtheorem{corollary}[theorem]{Corollary}
\theoremstyle{definition}
\newtheorem{definition}[theorem]{Definition}
\theoremstyle{remark}
\newtheorem{remark}[theorem]{Remark}
\numberwithin{equation}{section}
\begin{document}
\title[Bertrand Curves in three Dimensional Lie Groups]{Bertrand Curves in three Dimensional Lie Groups}
\author{O. Zeki Okuyucu$^{^{(1)}}$}
\address{$^{^{(1)}}$Bilecik \c{S}eyh Edebali University, Faculty of Science and Arts,
Department of Mathematics, 11210, Bilecik, Turkey}
\email{osman.okuyucu@bilecik.edu.tr}
\author{\.{I}smail G\"{o}k$^{^{(2)}}$}
\address{$^{^{(2)}}$Ankara University, Faculty of Science, Department of Mathematics,
06100, Tando\~{g}an, Ankara, Turkey}
\email{igok@science.ankara.edu.tr}
\author{Yusuf Yayl\i$^{^{(3)}}$}
\address{$^{^{(3)}}$Ankara University, Faculty of Science, Department of Mathematics,
06100, Tando\~{g}an, Ankara, Turkey}
\email{yayli@science.ankara.edu.tr}
\author{Nejat Ekmekci$^{^{(4)}}$}
\address{$^{^{(4)}}$Ankara University, Faculty of Science, Department of Mathematics,
06100, Tando\~{g}an, Ankara, Turkey}
\email{ekmekci@science.ankara.edu.tr}
\thanks{This paper is in final form and no version of it will be submitted for
publication elsewhere.}
\date{September 08, 2012}
\subjclass[2000]{Primary 53A04; Secondary 22E15}
\keywords{Bertrand curves, Lie groups.}

\begin{abstract}
In this paper, we give the definition of \ harmonic curvature function some
special curves such as helix, slant curves, Mannheim curves and Bertrand
curves. Then, we recall the characterizations of helices \cite{ciftci}, slant
curves (see \cite{zeki}) and Mannheim curves (see \cite{ismail}) in three
dimensional Lie groups using their harmonic curvature function.

Moreover, we define Bertrand curves in a three dimensional Lie group $G$ with
a bi-invariant metric and the main result in this paper is given as (Theorem
\ref{teo 3.2}): A curve $\alpha:I\subset \mathbb{R\rightarrow}G$ with the
Frenet apparatus $\left \{  T,N,B,\varkappa,\tau \right \}  $ is a Bertrand curve
if and only if%
\[
\lambda \varkappa+\mu \varkappa H=1
\]
where $\lambda$, $\mu$ are constants and $H$ is the harmonic curvature
function of the curve $\alpha.$

\end{abstract}
\maketitle

\setcounter{page}{1}

\setcounter{page}{1}


\setcounter{page}{1}


\section{Introduction}

The general theory of curves in a Euclidean space (or more generally in a
Riemannian manifolds) have been developed a long time ago and we have a deep
knowledge of its local geometry as well as its global geometry. In the theory
of curves in Euclidean space, one of the important and interesting problem is
characterizations of a regular curve. In the solution of the problem, the
curvature functions $k_{1}\left(  \text{or }\varkappa \right)  $ and
$k_{2}\left(  \text{or }\tau \right)  $ of a regular curve have an effective
role. For example: if $k_{1}=0=$ $k_{2}$, then the curve is a geodesic or if
$k_{1}=$constant$\neq0$ and $k_{2}=0,$ then the curve is a circle with radius
$\left(  1/k_{1}\right)  $, etc. Thus we can determine the shape and size of a
regular curve by using its curvatures. Another way in the solution of the
problem is the relationship between the Frenet vectors of the curves (see
\cite{kuh}). For instance Bertrand curves:

In the classical diferential geometry of curves, J. Bertrand studied curves in
Euclidean 3-space whose principal normals are the principal normals of another
curve. In (see \cite{bert}) he showed that a necessary and sufficient
condition for the existence of such a second curve is that a linear
relationship with constant coefficients shall exist between the first and
second curvatures of the given original curve. In other word, if we denote
first and second curvatures of a given curve by $k_{1}$ and $k_{2}$
respectively, then for $\lambda,\mu$ $\in \mathbb{R}$ we have $\lambda
k_{1}+\mu k_{2}=1$. Since the time of Bertrand's paper, pairs of curves of
this kind have been called \textit{Conjugate Bertrand} \textit{Curves}, or
more commonly \textit{Bertrand Curves} (see \cite{kuh})$.$

In 1888, \textit{C. Bioche} \cite{bioche} give a new theorem to obtaining
Bertrand curves by using the given two curves $C_{1}$ and $C_{2}$ in Euclidean
$3-$space. Later, in 1960, \textit{J. F. Burke} \cite{burke} give a theorem
related with \textit{Bioche's thorem on Bertrand curves.}

The following properties of Bertrand curves are well known: If two curves have
the same principal normals, \textbf{(i)} corresponding points are a fixed
distance apart; \textbf{(ii)} the tangents at corresponding points are at a
fixed angle. These well known properties of Bertrand curves in Euclidean
3-space was extended by \textit{L. R. Pears} in \cite{Pears} to \ Riemannian
$n-$space and found general results for Bertrand curves. When we applying
these general result to Euclidean $n$-space, it is easily find that either
$k_{2}$ or $k_{3}$ is zero; in other words, Bertrand curves in ,$\mathbb{E}%
^{n}(n>3)$ are degenerate curves. This result is restated by \textit{Matsuda
and Yorozu }\cite{mat}. They proved that \textit{there is no special Bertrand
curves }in\textit{\ }$E^{n}(n>3)$ and they define new kind, which is called
$\left(  1,3\right)  -$type Bertrand curves in $4-$dimensional Euclidean
space. Bertrand curves and their characterizations were studied by many
authours in Euclidean space as well as in Riemann--Otsuki space, in Minkowski
3- space and Minkowski spacetime (for instance see \cite{bal, bal1, il, jin,
kul, james, yil}.)


The degenarete semi-Riemannian geometry of Lie group is studied by
\c{C}\"{o}ken and \c{C}ift\c{c}i \cite{coken}. Moreover, they obtanied a
naturally reductive homogeneous semi-Riemannian space using the Lie group.
Then \c{C}ift\c{c}i \cite{ciftci} defined general helices in three dimensional
Lie groups with a bi-invariant metric and obtained a generalization of
Lancret's theorem. Also he gave a relation between the geodesics of the
so-called cylinders and general helices. Then, Okuyucu et al. \cite{zeki}
defined slant helices in three dimensional Lie groups with a bi-invariant
metric and obtained some characterizations using their harmonic curvature function.


Recently, \textit{Izumiya and Takeuchi} \cite{izu} have introduced the concept
of slant helix in Euclidean $3$-space.\ A slant helix in Euclidean space
$\mathbb{E}^{3}$ was defined by the property that its principal normal vector
field makes a constant angle with a fixed direction. Also, Izumiya and
Takeuchi showed that $\alpha$ is a slant helix if and only if the geodesic
curvature of spherical image of principal normal indicatrix $\left(  N\right)
$ of a space curve $\alpha$
\[
\sigma_{N}\left(  s\right)  =\left(  \frac{\varkappa^{2}}{\left(
\varkappa^{2}+\tau^{2}\right)  ^{3/2}}\left(  \frac{\tau}{\varkappa}\right)
^{\prime}\right)  \left(  s\right)
\]
is a constant function .


Harmonic curvature functions were defined earlier by \"{O}zdamar and Hac\i
saliho\u{g}lu \cite{ozdamar}. Recently, many studies have been reported on
generalized helices and slant helices using the harmonic curvatures in
Euclidean spaces and Minkowski spaces \cite{camci, gok, kulahci}. Then,
Okuyucu et al. \cite{zeki} defined slant helices in three dimensional Lie
groups with a bi-invariant metric and obtained some characterizations using
their harmonic curvature function.


In this paper, first of all, we give the definition of \ harmonic curvature
function some special curves such as helix, slant curves. Then, we recall the
characterizations of helices \cite{ciftci}, slant curves (see \cite{zeki}) and
Mannheim curves (see \cite{ismail}) in three dimensional Lie groups using
their harmonic curvature function. Moreover, we define Bertrand curves in a
three dimensional Lie group $G$ with a bi-invariant metric and then the main
result to this paper is given as (Theorem \ref{teo 3.2}): A curve
$\alpha:I\subset \mathbb{R\rightarrow}G$ with the Frenet apparatus $\left \{
T,N,B,\varkappa,\tau \right \}  $ is a Bertrand curve if and only if%
\[
\lambda \varkappa+\mu \varkappa H=1
\]
where $\lambda$, $\mu$ are constants and $H$ is the harmonic curvature
function of the curve $\alpha.$


Note that three dimensional Lie groups admitting bi-invariant metrics are
$SO\left(  3\right)  ,SU^{2}$ and Abellian Lie groups. So we believe that our
characterizations about Bertrand curves will be useful for curves theory in
Lie groups.

\section{Preliminaries}

Let $G$ be a Lie group with a bi-invariant metric $\left \langle \text{
},\right \rangle $ and $D$ be the Levi-Civita connection of Lie group $G.$ If
$\mathfrak{g}$ denotes the Lie algebra of $G$ then we know that $\mathfrak{g}
$ is issomorphic to $T_{e}G$ where $e$ is neutral element of $G.$ If
$\left \langle \text{ },\right \rangle $ is a bi-invariant metric on $G$ then we
have%
\begin{equation}
\left \langle X,\left[  Y,Z\right]  \right \rangle =\left \langle \left[
X,Y\right]  ,Z\right \rangle \label{2-1}%
\end{equation}
and
\begin{equation}
D_{X}Y=\frac{1}{2}\left[  X,Y\right]  \label{2-2}%
\end{equation}
for all $X,Y$ and $Z\in \mathfrak{g}.$


Let $\alpha:I\subset \mathbb{R\rightarrow}G$ be an arc-lenghted regular curve
and $\left \{  X_{1},X_{2,}...,X_{n}\right \}  $ be an orthonormal basis of
$\mathfrak{g}.$ In this case, we write that any two vector fields $W$ and $Z$
along the curve $\alpha \ $as $W=\sum_{i=1}^{n}w_{i}X_{i}$ and $Z=\sum
_{i=1}^{n}z_{i}X_{i}$ where $w_{i}:I\rightarrow \mathbb{R}$ and $z_{i}%
:I\rightarrow \mathbb{R}$ are smooth functions. Also the Lie bracket of two
vector fields $W$ and $Z$ is given
\[
\left[  W,Z\right]  =\sum_{i=1}^{n}w_{i}z_{i}\left[  X_{i},X_{j}\right]
\]
and the covariant derivative of $W$ along the curve $\alpha$ with the notation
$D_{\alpha^{\shortmid}}W$ is given as follows%
\begin{equation}
D_{\alpha^{\shortmid}}W=\overset{\cdot}{W}+\frac{1}{2}\left[  T,W\right]
\label{2-3}%
\end{equation}
where $T=\alpha^{\prime}$ and $\overset{\cdot}{W}=\sum_{i=1}^{n}\overset
{\cdot}{w_{i}}X_{i}$ or $\overset{\cdot}{W}=\sum_{i=1}^{n}\frac{dw}{dt}X_{i}.$
Note that if $W$ is the left-invariant vector field to the curve $\alpha$ then
$\overset{\cdot}{W}=0$ (see for details \cite{crouch}).


Let $G$ be a three dimensional Lie group and $\left(  T,N,B,\varkappa
,\tau \right)  $ denote the Frenet apparatus of the curve $\alpha$. Then the
Serret-Frenet formulas of the curve $\alpha$ satisfies:%

\[
D_{T}T=\varkappa N\text{, \  \  \ }D_{T}N=-\varkappa T+\tau B\text{,
\  \  \ }D_{T}B=-\tau N
\]
where $D$ is Levi-Civita connection of Lie group $G$ and $\varkappa
=\overset{\cdot}{\left \Vert T\right \Vert }.$


\begin{definition}
\label{tan 2.1}Let $\alpha:I\subset \mathbb{R\rightarrow}G$ be a parametrized
curve. Then $\alpha$ is called a \emph{general helix} if it makes a constant
angle with a left-invariant vector field $X$. That is,%
\[
\left \langle T(s),X\right \rangle =\cos \theta \text{ for all }s\in I,
\]
for the left-invariant vector field $X\in g$ is unit length and $\theta$ is a
constant angle between $X$ and $T$, which is the tangent vector field of the
curve $\alpha$ (see \cite{ciftci}).
\end{definition}


\begin{definition}
\label{tan 2.2}Let $\alpha:I\subset \mathbb{R\rightarrow}G$ be a parametrized
curve with the Frenet apparatus $\left(  T,N,B,\varkappa,\tau \right)  $ then
\begin{equation}
\tau_{G}=\frac{1}{2}\left \langle \left[  T,N\right]  ,B\right \rangle
\label{2-4}%
\end{equation}
or
\[
\tau_{G}=\frac{1}{2\varkappa^{2}\tau}\overset{\cdot \cdot \text{
\  \  \  \  \  \  \  \ }\cdot}{\left \langle T,\left[  T,T\right]  \right \rangle
}+\frac{1}{4\varkappa^{2}\tau}\overset{\text{ \  \ }\cdot}{\left \Vert \left[
T,T\right]  \right \Vert ^{2}}%
\]
(see \cite{ciftci}).
\end{definition}


\begin{definition}
\label{tan 2.3}Let $\alpha:I\subset \mathbb{R\rightarrow}G$ be an arc length
parametrized curve. Then $\alpha$ is called a \emph{slant helix} if its
principal normal vector field makes a constant angle with a left-invariant
vector field $X$ which is unit length. That is,%
\[
\left \langle N(s),X\right \rangle =\cos \theta \text{ for all }s\in I,
\]
where $\theta \neq \frac{\pi}{2}$ is a constant angle between $X$ and $N$ which
is the principal normal vector field of the curve $\alpha$ (see \cite{zeki}).
\end{definition}

\begin{definition}
\label{tan 2.4}Let $\alpha:I\subset \mathbb{R\rightarrow}G$ be an arc length
parametrized curve with the Frenet apparatus $\left \{  T,N,B,\varkappa
,\tau \right \}  .$ Then the \emph{harmonic curvature function} of the curve
$\alpha$ is defined by%
\[
H=\dfrac{\tau-\tau_{G}}{\varkappa}%
\]
where $\tau_{G}=\frac{1}{2}\left \langle \left[  T,N\right]  ,B\right \rangle $
(see \cite{zeki})$.$

\begin{theorem}
\label{teo 2.1}Let $\alpha:I\subset \mathbb{R\rightarrow}G$ be a parametrized
curve with the Frenet apparatus $\left(  T,N,B,\varkappa,\tau \right)  $. If
the curve $\alpha$ is a general helix, if and only if%
\[
\tau=c\varkappa+\tau_{G}%
\]
where c is a constant (see \cite{ciftci}) or using the definition of the
harmonic curvature function of the curve $\alpha$ (see \cite{zeki}) is
constant function.
\end{theorem}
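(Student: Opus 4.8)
The plan is to first notice that the two stated forms of the conclusion are literally the same: since $H=(\tau-\tau_{G})/\varkappa$ by Definition~\ref{tan 2.4}, the relation $\tau=c\varkappa+\tau_{G}$ says exactly that $H\equiv c$ is constant. So it suffices to prove that $\alpha$ is a general helix if and only if $H$ is constant. The engine for both directions will be a \emph{reduced Frenet system} for the components of the unit left-invariant axis field $X$ in the moving frame. Writing $X=pT+qN+rB$ with $p=\langle X,T\rangle$, $q=\langle X,N\rangle$, $r=\langle X,B\rangle$, the general-helix condition of Definition~\ref{tan 2.1} is precisely that $p$ be constant.

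To derive the system I would compute $D_{T}X$ in two ways. On one hand, since $X$ is left-invariant, formula~\eqref{2-3} gives $D_{T}X=\tfrac12[T,X]$, where the bracket is the pointwise Lie-algebra bracket and hence $\mathbb{R}$-bilinear in the frame; using ad-invariance~\eqref{2-1} one checks $\langle[T,N],T\rangle=\langle[T,N],N\rangle=0$, so $[T,N]=2\tau_{G}B$ and likewise $[T,B]=-2\tau_{G}N$, whence $\tfrac12[T,X]=-r\tau_{G}N+q\tau_{G}B$. On the other hand, applying the Leibniz rule to $D_{T}(pT+qN+rB)$ together with the Serret--Frenet formulas yields $D_{T}X=(\dot p-\varkappa q)T+(\varkappa p+\dot q-\tau r)N+(\tau q+\dot r)B$. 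Equating the two expressions and substituting $\tau-\tau_{G}=\varkappa H$ gives
\[
\dot p=\varkappa q,\qquad \dot q=\varkappa(-p+rH),\qquad \dot r=-\varkappa H\,q.
\]

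From here both implications are short. For the forward direction, if $\alpha$ is a general helix then $p$ is constant, so $\dot p=\varkappa q=0$ forces $q\equiv0$ (as $\varkappa>0$); then $\dot r=0$ and $\dot q=0$ give $r$ constant and $p=rH$. Since $X$ is unit and $q=0$, the component $r$ cannot vanish, so $H=p/r$ is constant. For the converse, assume $H$ is a constant, set $q_{0}=0$, $r_{0}=1/\sqrt{1+H^{2}}$, $p_{0}=H/\sqrt{1+H^{2}}$, and define the frame field $X:=p_{0}T+r_{0}B$. A direct computation of $\dot X=D_{T}X-\tfrac12[T,X]=\varkappa(p_{0}-r_{0}H)N=0$ shows that $X$ has constant components in the fixed left-invariant basis, i.e.\ $X$ is genuinely a unit left-invariant vector field; since $\langle X,T\rangle=p_{0}$ is constant, $\alpha$ is a general helix.

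I expect the main obstacle to be conceptual rather than computational. First, one must be careful that the bracket in~\eqref{2-3} is the algebraic (tensorial) Lie-algebra bracket and not the vector-field bracket, as this is what lets $\tfrac12[T,X]$ expand without Leibniz terms and produces the clean coefficients $\tau_{G}$. Second, the delicate step of the converse is the verification that the \emph{constructed} field $X$ is actually left-invariant; this is exactly the content of $\dot X=0$, and it is precisely where the hypothesis that $H$ is constant, through the identity $p_{0}=r_{0}H$, is indispensable.
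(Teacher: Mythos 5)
Your proof is correct, but it takes a genuinely different route from the paper, for the simple reason that the paper contains no proof of Theorem \ref{teo 2.1}: the statement is recalled from \c{C}ift\c{c}i \cite{ciftci}, and the only argument the paper itself supplies is for the restatement immediately following it, whose proof is declared ``obvious'' from Definition \ref{tan 2.4} --- i.e.\ exactly the observation in your first paragraph that $\tau=c\varkappa+\tau_{G}$ and $H\equiv c$ are the same condition. Everything after that in your proposal is content the paper delegates to the literature, and it checks out: the reduced system $\dot p=\varkappa q$, $\dot q=\varkappa(-p+rH)$, $\dot r=-\varkappa Hq$ is right (it relies on the bracket in \eqref{2-3} being tensorial and on the identities $[T,N]=2\tau_{G}B$, $[T,B]=-2\tau_{G}N$, which you re-derive from \eqref{2-1} but which the paper simply quotes as Proposition \ref{prop 3.1}); the forward direction correctly gets $q\equiv 0$, $r$ constant and $p=rH$, hence $H$ constant (one small phrasing gap: $r\neq 0$ does not follow from unitness and $q=0$ alone, since $p=\pm1$, $r=0$ is unit --- you also need $p=rH$, which would force $X=0$; the ingredient is in your text, just not invoked in that sentence); and the converse correctly constructs the axis $X=p_{0}T+r_{0}B$ and verifies its left-invariance via $\dot X=0$, which is precisely where constancy of $H$ enters. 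In substance this is the Lie-group version of the classical Lancret argument and is close to what \c{C}ift\c{c}i actually does in the cited reference (he differentiates the relations $\langle T,X\rangle=\cos\theta$ and $\langle N,X\rangle=0$ rather than writing the full ODE system, and builds the same axis for sufficiency). What the paper's treatment buys is brevity, treating the equivalence as known background; what yours buys is a self-contained argument that makes explicit where left-invariance, bi-invariance of the metric, and $\varkappa>0$ are genuinely needed.
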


\begin{theorem}
Let $\alpha:I\subset \mathbb{R\rightarrow}G$ be a parametrized curve with the
Frenet apparatus $\left(  T,N,B,\varkappa,\tau \right)  $. If the curve
$\alpha$ is a general helix, if and only if the harmonic curvature function of
the curve $\alpha$ is a constant function.
\end{theorem}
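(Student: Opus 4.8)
The plan is to derive this characterization directly from Theorem \ref{teo 2.1} together with the definition of the harmonic curvature function given in Definition \ref{tan 2.4}. Theorem \ref{teo 2.1} already tells us that $\alpha$ is a general helix if and only if $\tau=c\varkappa+\tau_{G}$ for some constant $c$, and the harmonic curvature function is defined by $H=(\tau-\tau_{G})/\varkappa$. These two conditions should turn out to be algebraically equivalent, so the whole argument reduces to clearing the denominator in the definition of $H$ and matching the result against the relation in Theorem \ref{teo 2.1}.

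First I would assume that $\alpha$ is a general helix. By Theorem \ref{teo 2.1} there exists a constant $c$ with $\tau=c\varkappa+\tau_{G}$. Substituting this into the definition of $H$ yields
\[
H=\frac{\tau-\tau_{G}}{\varkappa}=\frac{c\varkappa+\tau_{G}-\tau_{G}}{\varkappa}=c,
\]
so $H$ is the constant $c$. This settles the forward direction.

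Conversely, I would assume that $H$ is a constant function, say $H\equiv c$. Then the defining relation $H=(\tau-\tau_{G})/\varkappa$ rearranges to $\tau-\tau_{G}=c\varkappa$, that is, $\tau=c\varkappa+\tau_{G}$. Applying Theorem \ref{teo 2.1} in the reverse direction, this is exactly the condition for $\alpha$ to be a general helix, which completes the equivalence.

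Since the statement is essentially an algebraic reformulation of an already-established characterization, I do not expect any genuine obstacle. The only point requiring a word of care is that $\varkappa$ must be nowhere zero for the division defining $H$ to make sense; this is guaranteed by the standing regularity assumption on the Frenet apparatus $\left(T,N,B,\varkappa,\tau\right)$. The substantive content therefore lies entirely in Theorem \ref{teo 2.1}, and the present statement is just its restatement through the harmonic curvature function.
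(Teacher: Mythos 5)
Your proposal is correct and matches the paper's own argument: the paper proves this theorem by simply citing Definition \ref{tan 2.4} and Theorem \ref{teo 2.1}, and your write-up just carries out explicitly the same trivial substitution $H=(\tau-\tau_{G})/\varkappa$ in both directions. Nothing is missing; you have merely made the paper's ``obvious'' step explicit.
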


\begin{proof}
It is obvious using the Definition \ref{tan 2.4} and Theorem \ref{tan 2.1}.
\end{proof}
\end{definition}

\begin{theorem}
\label{teo 2.2}Let $\alpha:I\subset \mathbb{R\rightarrow}G$ \ be a unit speed
curve with the Frenet apparatus $\left(  T,N,B,\varkappa,\tau \right)  $. Then
$\alpha$ is a slant helix if and only if%
\[
\sigma_{N}=\frac{\varkappa(1+H^{2})^{\frac{3}{2}}}{H^{\shortmid}}=\tan \theta
\]
is a constant where $H$ is a harmonic curvature function of the curve $\alpha$
and $\theta \neq \frac{\pi}{2}$ is a constant (see \cite{zeki}).
\end{theorem}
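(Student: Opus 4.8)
The plan is to use the bi-invariance of the metric to rewrite the Serret--Frenet system in a form that is formally the Euclidean one with torsion replaced by $\varkappa H$, and then to run the Izumiya--Takeuchi argument. First I would observe that relation (\ref{2-1}) makes the trilinear form $(X,Y,Z)\mapsto\langle[X,Y],Z\rangle$ totally antisymmetric: it is antisymmetric in the first two slots by definition of the bracket, and (\ref{2-1}) supplies cyclic invariance. Evaluating it on the orthonormal frame $\{T,N,B\}$ and inserting $\tau_{G}=\tfrac12\langle[T,N],B\rangle$ from Definition \ref{tan 2.2}, every component except the one carrying $\tau_{G}$ is forced to vanish, so that
\[
[T,N]=2\tau_{G}B,\qquad[T,B]=-2\tau_{G}N,\qquad[N,B]=2\tau_{G}T.
\]
Feeding these into the covariant-derivative formula (\ref{2-3}) together with $D_{T}T=\varkappa N$, $D_{T}N=-\varkappa T+\tau B$, $D_{T}B=-\tau N$, and using $H=(\tau-\tau_{G})/\varkappa$ from Definition \ref{tan 2.4}, I obtain the ordinary derivatives along $\alpha$:
\[
\dot{T}=\varkappa N,\qquad \dot{N}=-\varkappa T+\varkappa H\,B,\qquad \dot{B}=-\varkappa H\,N.
\]
This reduction is the heart of the proof; from here the computation mirrors the classical one with effective torsion $\varkappa H$, and I write $H'=\dot{H}$ for the arc-length derivative.

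For the direct implication, suppose $\alpha$ is a slant helix, so that $\langle N,X\rangle=\cos\theta$ is constant for some unit left-invariant field $X$. Expanding $X=aT+bN+cB$ in the Frenet frame, where $a=\langle T,X\rangle$, $b=\langle N,X\rangle=\cos\theta$ and $c=\langle B,X\rangle$, I would use that $X$ left-invariant means $\dot{X}=0$ and that the dotted derivative is metric-compatible (the $X_{i}$ being orthonormal with constant inner products). Differentiating the three inner products with the reduced Frenet equations then gives $a=Hc$ (from $\langle N,X\rangle$ constant), $\dot{a}=\varkappa b$, and $\dot{c}=-\varkappa H b$. Eliminating $c$ through the unit-length constraint $a^{2}+c^{2}=\sin^{2}\theta$, which yields $c=\sin\theta/\sqrt{1+H^{2}}$, and combining $\dot a=\varkappa b$ with the differentiated relation $\dot a=\dot H\,c+H\dot c$, I arrive after simplification at
\[
\frac{\varkappa\,(1+H^{2})^{3/2}}{H'}=\tan\theta,
\]
a constant.

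For the converse, assume $\sigma_{N}=\varkappa(1+H^{2})^{3/2}/H'$ is a constant, which I name $\tan\theta$. I would produce the required field explicitly by setting $b=\cos\theta$, $c=\sin\theta/\sqrt{1+H^{2}}$ and $a=Hc$, so that $X=aT+bN+cB$ automatically satisfies $\langle X,X\rangle=c^{2}(1+H^{2})+\cos^{2}\theta=1$ and $\langle N,X\rangle=\cos\theta$. It then remains to verify that $X$ is left-invariant, i.e. $\dot{X}=0$. Computing $\dot{X}$ from the reduced Frenet equations, the $N$-component vanishes by $a=Hc$, while the $T$- and $B$-components are $\dot{a}-\varkappa b$ and $\dot{c}+\varkappa H b$; differentiating the closed forms of $a$ and $c$ shows that both of these vanish precisely when $H'/\big(\varkappa(1+H^{2})^{3/2}\big)=\cot\theta$, which is exactly the standing hypothesis. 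Hence $X$ is a unit left-invariant field making the constant angle $\theta$ with $N$, so $\alpha$ is a slant helix.

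The step I expect to be the main obstacle is the opening reduction: one has to be sure that total antisymmetry collapses all three brackets onto the single $\tau_{G}$-direction, since this is exactly what pushes $\dot{N}$ into the $\{T,B\}$-plane and lets $\varkappa H$ act as a torsion. Once that is secured, both implications are a disciplined differentiation of the frame components; the only real care needed is to track where $\theta\neq\pi/2$ (equivalently $\sin\theta\neq0$) and $H'\neq0$ are used, so that the divisions defining $c$ and $\sigma_{N}$ remain legitimate.
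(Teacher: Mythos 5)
Your proof is correct: the bi-invariance identity \eqref{2-1} does make $\left\langle \left[ \cdot,\cdot\right] ,\cdot\right\rangle $ totally antisymmetric, collapsing the frame brackets onto the $\tau_{G}$-direction (this is exactly the paper's Proposition \ref{prop 3.1}), the reduced system $\dot{T}=\varkappa N$, $\dot{N}=-\varkappa T+\varkappa HB$, $\dot{B}=-\varkappa HN$ follows from \eqref{2-3}, and both implications then go through by the differentiations you describe, the only gloss being the sign choice $c=\pm \sin \theta/\sqrt{1+H^{2}}$ (fixed by continuity and by replacing $X$ with $-X$), and the observation that $\dot{X}=0$ forces constant coefficients in the left-invariant orthonormal basis, hence $X$ genuinely left-invariant. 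Note, however, that this paper contains no proof of Theorem \ref{teo 2.2} to compare against --- it is recalled verbatim from \cite{zeki} --- and your argument is the standard one used in that reference and implicitly throughout Section 3 here, so you have in effect correctly reconstructed the cited proof rather than found a different route.
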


\begin{theorem}
Let $\alpha:I\subset \mathbb{R\rightarrow}G$ \ be a parametrized curve with arc
length parameter $s$ and the Frenet apparatus $\left(  T,N,B,\varkappa
,\tau \right)  $. Then, $\alpha$ is Mannheim curve if and only if
\begin{equation}
\lambda \varkappa \left(  1+H^{2}\right)  =1,\text{ for all }s\in I
\end{equation}
where $\lambda$ is constant and $H$ is the harmonic curvature function of the
curve $\alpha$ (see \cite{ismail})$.$
\end{theorem}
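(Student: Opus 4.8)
The plan is to work from the defining property of a Mannheim pair: $\alpha$ is a Mannheim curve precisely when there is a partner curve $\alpha^{*}$ whose binormal $B^{*}$ coincides, at corresponding points, with the principal normal $N$ of $\alpha$, so that $\alpha^{*}=\alpha+\lambda N$ for some function $\lambda$. The only genuinely Lie-group-specific ingredient is the passage between the ordinary derivative $\overset{\cdot}{(\ )}$ and the Levi-Civita derivative $D_{T}$, governed by \eqref{2-3}. Using the bi-invariance identity \eqref{2-1}, the form $\langle[\cdot,\cdot],\cdot\rangle$ is totally antisymmetric, so $\langle[T,N],T\rangle=\langle[T,N],N\rangle=0$ and hence $\frac{1}{2}[T,N]=\tau_{G}B$; likewise $\frac{1}{2}[T,B]=-\tau_{G}N$. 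Feeding these into \eqref{2-3} and the Serret--Frenet formulas gives the clean relations
\[
\overset{\cdot}{T}=\varkappa N,\qquad \overset{\cdot}{N}=-\varkappa T+\varkappa H B,\qquad \overset{\cdot}{B}=-\varkappa H N ,
\]
in which the torsion $\tau$ has been replaced throughout by $\tau-\tau_{G}=\varkappa H$ (recall Definition \ref{tan 2.4}). Recognizing this substitution is, I expect, the crux of the whole argument.

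First I would differentiate $\alpha^{*}=\alpha+\lambda N$ with the ordinary derivative, obtaining $\overset{\cdot}{\alpha^{*}}=(1-\lambda\varkappa)T+\lambda' N+\lambda\varkappa H B$. Since $N=B^{*}$ is orthogonal to the tangent $T^{*}$ of $\alpha^{*}$, the $N$-component must vanish, which forces $\lambda'=0$; thus $\lambda$ is constant and the two curves stay a fixed distance apart. Writing $v=\Vert\overset{\cdot}{\alpha^{*}}\Vert$, the unit tangent $T^{*}=\frac{1}{v}\left[(1-\lambda\varkappa)T+\lambda\varkappa H B\right]$ lies in the plane $\mathrm{span}\{T,B\}$, so I may set $T^{*}=\cos\phi\,T+\sin\phi\,B$ with $\tan\phi=\lambda\varkappa H/(1-\lambda\varkappa)$.

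The second step extracts a second expression for $\phi$ from the requirement $B^{*}=N$. Because $D_{T^{*}}T^{*}$ is parallel to $N^{*}$ and $N^{*}\perp B^{*}=N$, the $N$-component of $D_{T^{*}}T^{*}=\frac{1}{v}\overset{\cdot}{T^{*}}$ must vanish. Differentiating $T^{*}=\cos\phi\,T+\sin\phi\,B$ and using the relations for the dotted frame above, the $N$-component equals $\varkappa\cos\phi-\varkappa H\sin\phi$, and setting it to zero yields $\tan\phi=1/H$. Equating the two values of $\tan\phi$,
\[
\frac{\lambda\varkappa H}{1-\lambda\varkappa}=\frac{1}{H}\quad\Longrightarrow\quad \lambda\varkappa(1+H^{2})=1 ,
\]
which is the asserted characterization.

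For the converse I would assume $\lambda\varkappa(1+H^{2})=1$ with $\lambda$ constant, define $\alpha^{*}=\alpha+\lambda N$, and run the same computation in reverse: the tangent $T^{*}$ automatically has no $N$-component, and the hypothesis forces the two expressions for $\tan\phi$ to agree, which is exactly the statement that the binormal of $\alpha^{*}$ is $N$, i.e. $\alpha$ is a Mannheim curve. The hard part will not be any single computation but the bookkeeping of the Levi-Civita versus ordinary derivative: if one were to differentiate with $D_{T}$ throughout one would obtain the Euclidean-looking identity $\lambda\varkappa\left(1+(\tau/\varkappa)^{2}\right)=1$, and it is precisely the $\tau_{G}$-corrections coming from the brackets $\frac{1}{2}[T,N]=\tau_{G}B$ and $\frac{1}{2}[T,B]=-\tau_{G}N$ that convert $\tau/\varkappa$ into the harmonic curvature $H$.
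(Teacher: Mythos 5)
The paper never actually proves this statement: it is recalled verbatim from \cite{ismail}, so the only in-paper material to compare against is the method used for the Bertrand analogues (Theorems \ref{teo 3.1}--\ref{teo 3.3}). Your forward direction is correct and is in exactly that spirit: differentiate the partner relation, absorb the bracket terms via Proposition \ref{prop 3.1}, let the orthogonality forced by $B^{*}=\pm N$ kill the $N$-component (so $\lambda^{\prime}=0$), then read the characterization off the angle of $T^{*}$ in $\mathrm{span}\{T,B\}$. Your repackaging of the bracket corrections into the ``dotted Frenet equations'' $\overset{\cdot}{T}=\varkappa N$, $\overset{\cdot}{N}=-\varkappa T+\varkappa HB$, $\overset{\cdot}{B}=-\varkappa HN$ is a clean and correct way to organize the same bookkeeping the paper does by invoking Proposition \ref{prop 3.1} line by line, and it makes transparent why every Euclidean formula survives with $\tau$ replaced by $\varkappa H=\tau-\tau_{G}$.

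The genuine gap is in your converse. ``Running the computation in reverse'' presupposes that $\alpha^{*}=\alpha+\lambda N$ has a well-defined Frenet frame, i.e.\ nonvanishing curvature; otherwise its binormal $B^{*}$ does not exist and the Mannheim condition cannot even be stated. Under your hypothesis $\lambda\varkappa(1+H^{2})=1$ one gets $T^{*}=\pm\left(HT+B\right)/\sqrt{1+H^{2}}$, and then your dotted frame relations give
\[
\overset{\cdot}{T^{*}}=\frac{H^{\prime}}{\left(1+H^{2}\right)^{3/2}}\left(T-HB\right),
\]
so whenever $H^{\prime}\equiv 0$ --- that is, exactly when $\alpha$ is a general helix (Theorem \ref{teo 2.1}) --- the partner curve is a geodesic, $\varkappa^{*}=0$, and the argument collapses. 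Hence the ``if'' direction as you state it needs a non-degeneracy hypothesis on the partner curve. This is not a pedantic point in this setting: the paper flags the identical issue for Bertrand curves in the Remark preceding Theorem \ref{teo 3.3} and resolves it by assuming the partner is a ``special Frenet curve''; your Mannheim converse needs the same extra assumption (or an explicit exclusion of the case $H$ constant). A smaller presentational remark: in the forward direction, equating the two expressions for $\tan\phi$ silently assumes $H\neq 0$ and $1-\lambda\varkappa\neq 0$; phrasing the step as ``the $N$-component of $\overset{\cdot}{T^{*}}$ vanishes'' covers those cases without extra work.
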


\begin{theorem}
Let $\alpha:I\subset \mathbb{R\rightarrow}G$ \ be a parametrized curve with arc
length parameter $s$. Then $\beta$ is the Mannheim partner curve of $\alpha$
if and only if the curvature $\varkappa_{\beta}$ and the torsion $\tau_{\beta
}$ of $\beta$ satisfy the following equation
\[
\frac{d\varkappa_{\beta}H_{\beta}}{d\overline{s}}=\frac{\varkappa_{\beta}}%
{\mu}(1+\mu^{2}\varkappa_{\beta}^{2}H_{\beta}^{2})
\]
where $\mu$ is constant and $H_{\beta}$ is the harmonic curvature function of
the curve $\beta.$
\end{theorem}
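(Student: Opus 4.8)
The plan is to work directly from the defining relation of a Mannheim pair and to differentiate it, converting every ordinary derivative into the Levi--Civita derivative $D$ so that the Lie-bracket corrections (and hence the harmonic curvature) appear automatically. Recall that $\beta$ being the Mannheim partner of $\alpha$ means that the principal normal $N$ of $\alpha$ coincides with the binormal $B_\beta$ of $\beta$ at corresponding points, and that the two curves are joined by a relation $\alpha(s)=\beta(\overline{s})+\mu B_\beta(\overline{s})$ with $\mu$ a constant (the fixed distance between corresponding points). Here $s$ and $\overline{s}$ are the arc lengths of $\alpha$ and $\beta$, and I will freely use $N=B_\beta$ together with the Serret--Frenet equations $D_{T_\beta}T_\beta=\varkappa_\beta N_\beta$, $D_{T_\beta}N_\beta=-\varkappa_\beta T_\beta+\tau_\beta B_\beta$, $D_{T_\beta}B_\beta=-\tau_\beta N_\beta$ for $\beta$.

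First I would differentiate $\alpha=\beta+\mu B_\beta$ with respect to $\overline{s}$. Since this is an identity between points, the left side gives $\tfrac{ds}{d\overline{s}}\,T$ and the right side gives $T_\beta+\mu\dot B_\beta$ with $\dot B_\beta$ the ordinary derivative. The crucial move is to write $\dot B_\beta=D_{T_\beta}B_\beta-\tfrac12[T_\beta,B_\beta]$; using the bi-invariance identity (\ref{2-1}) and the definition $\tau_{G_\beta}=\tfrac12\langle[T_\beta,N_\beta],B_\beta\rangle$ one finds that $\tfrac12[T_\beta,B_\beta]$ is purely normal with $N_\beta$-component $-\tau_{G_\beta}$, so that $\dot B_\beta=-(\tau_\beta-\tau_{G_\beta})N_\beta=-\varkappa_\beta H_\beta N_\beta$. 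This is exactly where the harmonic curvature enters: the effective torsion felt by the position is $\tau_\beta-\tau_{G_\beta}=\varkappa_\beta H_\beta$, not the bare $\tau_\beta$. Because $T\perp N=B_\beta$, I may set $T=\cos\theta\,T_\beta+\sin\theta\,N_\beta$ for some function $\theta(\overline{s})$; comparing components in $\tfrac{ds}{d\overline{s}}\,T=T_\beta-\mu\varkappa_\beta H_\beta N_\beta$ yields $\cos\theta\,\tfrac{ds}{d\overline{s}}=1$ and $\sin\theta\,\tfrac{ds}{d\overline{s}}=-\mu\varkappa_\beta H_\beta$, whence $\tan\theta=-\mu\varkappa_\beta H_\beta$.

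Next I would differentiate the decomposition $T=\cos\theta\,T_\beta+\sin\theta\,N_\beta$ covariantly along $\beta$. On one side the Frenet equations for $\beta$ expand $D_{T_\beta}(\cos\theta\,T_\beta+\sin\theta\,N_\beta)$ into its $T_\beta$-, $N_\beta$- and $B_\beta$-components; on the other side $D_{T_\beta}T=\tfrac{ds}{d\overline{s}}\varkappa N+\tfrac12[T_\beta,T]$, and the bracket again collapses, via (\ref{2-1}), onto the $B_\beta$ direction. Matching the $T_\beta$-component forces $\theta'=-\varkappa_\beta$, while the $B_\beta$-component reproduces the consistency relation $\varkappa_\beta H_\beta\sin\theta=\tfrac{ds}{d\overline{s}}\varkappa$. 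Finally, differentiating $\tan\theta=-\mu\varkappa_\beta H_\beta$ and substituting $\theta'=-\varkappa_\beta$ together with $\sec^2\theta=1+\tan^2\theta=1+\mu^2\varkappa_\beta^2H_\beta^2$ gives
\[
\frac{d\varkappa_\beta H_\beta}{d\overline{s}}=\frac{\varkappa_\beta}{\mu}\bigl(1+\mu^2\varkappa_\beta^2H_\beta^2\bigr),
\]
which is the asserted equation. The converse runs the same computation backwards: from the ODE one recovers $\tan\theta=-\mu\varkappa_\beta H_\beta$ and $\theta'=-\varkappa_\beta$, defines the candidate $\alpha=\beta+\mu B_\beta$, and checks that its principal normal equals $B_\beta$.

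I expect the main obstacle to be the careful bookkeeping of the Lie-bracket terms when passing between ordinary and covariant derivatives and between the two Frenet frames: one must verify repeatedly, using only (\ref{2-1}) and the definition of $\tau_G$, that each bracket $\tfrac12[T_\beta,\,\cdot\,]$ collapses onto a single frame direction and contributes precisely the $\tau_{G_\beta}$ needed to turn $\tau_\beta$ into $\varkappa_\beta H_\beta$. A secondary point requiring care is justifying $D_{T_\beta}T=\tfrac{ds}{d\overline{s}}\varkappa N+\tfrac12[T_\beta,T]$ as a genuine cross-curve identity, and recording the nondegeneracy hypotheses ($\sin\theta\neq0$, $\mu\neq0$, and the orientation choice $N=B_\beta$) under which the divisions above are legitimate.
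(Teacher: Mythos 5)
The first thing to note is that the paper contains no proof of this statement for you to be compared against: the theorem sits in the Preliminaries and is recalled (like the Mannheim characterization $\lambda\varkappa(1+H^{2})=1$ just before it) from the authors' companion paper \cite{ismail}, which is where the actual proof lives. So your proposal cannot match or diverge from an in-paper argument; what it can be measured against is the technique the paper itself uses for the analogous Bertrand statements in Section 3, and by that measure your route is exactly the expected one: write the offset relation, differentiate it using Eq. \eqref{2-3} to trade ordinary derivatives for covariant ones, collapse every bracket via Proposition \ref{prop 3.1} so that $\tau_{\beta}$ is systematically replaced by $\tau_{\beta}-\tau_{G\beta}=\varkappa_{\beta}H_{\beta}$, decompose the tangent of one curve in the Frenet frame of the other through an angle $\theta$, and extract the curvature relation. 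I checked your bookkeeping: $\dot{B}_{\beta}=-\varkappa_{\beta}H_{\beta}N_{\beta}$, the relations $\cos\theta\,\frac{ds}{d\overline{s}}=1$, $\sin\theta\,\frac{ds}{d\overline{s}}=-\mu\varkappa_{\beta}H_{\beta}$, hence $\tan\theta=-\mu\varkappa_{\beta}H_{\beta}$, the component matching giving $\theta'=-\varkappa_{\beta}$ and $\sin\theta\,\varkappa_{\beta}H_{\beta}=\frac{ds}{d\overline{s}}\varkappa$, and the final differentiation of $\tan\theta$ are all correct and combine to the stated ODE. Your cross-curve identity $D_{T_{\beta}}T=\frac{ds}{d\overline{s}}\varkappa N+\frac{1}{2}\left[  T_{\beta},T\right]$ is also legitimate: it follows from Eq. \eqref{2-3}, the chain rule for the componentwise derivative in the left-invariant frame, and $\left[  T,T\right]  =0$.

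Two points should be tightened. First, you assume from the start that $\mu$ in $\alpha=\beta+\mu B_{\beta}$ is constant; under the bare definition of a Mannheim pair this needs a (one-line) verification, parallel to the paper's Theorem \ref{teo 3.1} for Bertrand couples: write $\alpha=\beta+\mu\left(  \overline{s}\right)  B_{\beta}$, differentiate, and take the inner product with $B_{\beta}$, using $T\perp N=\pm B_{\beta}$ to get $\mu'=0$. Second, the converse is only gestured at, and "running the computation backwards" is not quite what happens; the honest version is a forward computation with the ODE as input: define $\theta$ by $\tan\theta=-\mu\varkappa_{\beta}H_{\beta}$, deduce $\theta'=-\varkappa_{\beta}$ from the ODE, set $\alpha=\beta+\mu B_{\beta}$, and compute $D_{T_{\beta}}T=\sin\theta\,\tau_{\beta}B_{\beta}$ (the $T_{\beta}$- and $N_{\beta}$-components cancel precisely because $\theta'+\varkappa_{\beta}=0$); subtracting $\frac{1}{2}\left[  T_{\beta},T\right]  =\sin\theta\,\tau_{G\beta}B_{\beta}$ then gives $\frac{ds}{d\overline{s}}\varkappa N=\sin\theta\,\varkappa_{\beta}H_{\beta}B_{\beta}$, so the principal normal of $\alpha$ is parallel to $B_{\beta}$, which is the Mannheim condition. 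With those two additions your argument is a complete and correct proof of the statement.
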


\section{Bertrand curves in a three dimensional Lie group}

In this section, we define Bertrand curves and their characterizations are
given in a three dimensional Lie group $G$ with a bi-invariant metric
$\left \langle \text{ },\right \rangle $. Also we give some characterizations of
Bertrand curves using the special cases of $G$.


\begin{definition}
\label{tan 3.1}A curve $\alpha$ in $3$-dimensional Lie group $G$ is a
\emph{Bertrand curve }if there exists a special curve\emph{\ }$\beta$ in
$3$-dimensional Lie group $G$ such that principal normal vector field of
$\alpha$ is linearly dependent principal normal vector field of $\beta$ at
corresponding point under $\psi$ which is bijection from $\alpha$ to $\beta.$
In this case $\beta$ is called the $\emph{Bertrand}$ $\emph{mate}$
$\emph{curve}$ of $\alpha$ and $\left(  \alpha,\beta \right)  $ is called
$\emph{Bertrand}$ $\emph{curve}$\emph{\ couple.}
\end{definition}

The curve $\alpha:I\subset \mathbb{R\rightarrow}G$ in $3$-dimensional Lie group
$G$ is parametrized by the arc-length parameter $s$ and from the Definition
\ref{tan 3.1} Bertrand mate curve of $\alpha$ is given $\beta:\overline
{I}\subset \mathbb{R\rightarrow}G$ in $3$-dimensional Lie group $G$ with the
help of Figure 1 such that%
\[
\text{\texttt{Figure1}}\mathtt{:}\text{{}Bertrand curve\ couple }\left(
\alpha,\beta \right)
\]%
\[
\beta \left(  s\right)  =\alpha \left(  s\right)  +\lambda \left(  s\right)
N\left(  s\right)  ,\text{ }s\in I
\]
where $\lambda$ is a smooth function on $I$ and $N$ is the principal normal
vector field of $\alpha$. We should remark that the parameter $s$ generally is
not an arc-length parameter of $\beta.$ So, we define the arc-length parameter
of the curve $\beta$ by
\[
\overline{s}=\psi \left(  s\right)  =\int \limits_{0}^{s}\left \Vert \frac
{d\beta \left(  s\right)  }{ds}\right \Vert ds
\]
where $\psi:I\longrightarrow \overline{I}$ is a smooth function and holds the
following equality%
\begin{equation}
\psi^{\prime}\left(  s\right)  =\varkappa H\sqrt{\lambda^{2}+\mu^{2}%
}\label{3-1}%
\end{equation}
for $s\in I.$

\begin{proposition}
\label{prop 3.1}Let $\alpha:I\subset \mathbb{R\rightarrow}G$ be an arc length
parametrized curve with the Frenet apparatus $\left \{  T,N,B\right \}  $. Then
the following equalities%
\begin{align*}
\left[  T,N\right]   &  =\left \langle \left[  T,N\right]  ,B\right \rangle
B=2\tau_{G}B\\
\left[  T,B\right]   &  =\left \langle \left[  T,B\right]  ,N\right \rangle
N=-2\tau_{G}N
\end{align*}
hold \cite{zeki}.
\end{proposition}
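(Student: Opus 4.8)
The plan is to derive both identities from a single structural consequence of bi-invariance, namely that in a bi-invariant metric the bracket $[X,Y]$ is orthogonal to each of its two entries, and then to exploit the fact that $G$ is three dimensional so that the orthogonal complement of the plane spanned by two frame vectors is a line.

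First I would record the orthogonality relations. In the ad-invariance identity \eqref{2-1}, written as $\langle X,[Y,Z]\rangle=\langle[X,Y],Z\rangle$, I set $Z=X$ to obtain $\langle X,[Y,X]\rangle=\langle[X,Y],X\rangle$. Since $[Y,X]=-[X,Y]$, the left-hand side equals $-\langle[X,Y],X\rangle$, and comparing the two sides forces $\langle[X,Y],X\rangle=0$. Applying this with the roles of $X$ and $Y$ interchanged and using $[X,Y]=-[Y,X]$ gives $\langle[X,Y],Y\rangle=0$ as well. Thus $[X,Y]\perp X$ and $[X,Y]\perp Y$ for all $X,Y\in\mathfrak{g}$.

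Next I would specialize to the Frenet frame, which is an orthonormal basis of the three dimensional space $T_{\alpha(s)}G\cong\mathfrak{g}$. Because $[T,N]$ is orthogonal to both $T$ and $N$, it must lie in the line spanned by $B$, so expanding in the frame yields $[T,N]=\langle[T,N],B\rangle\,B$; the definition \eqref{2-4}, $\tau_{G}=\tfrac12\langle[T,N],B\rangle$, then gives $[T,N]=2\tau_{G}B$. Identically, $[T,B]$ is orthogonal to $T$ and $B$, hence a multiple of $N$, so $[T,B]=\langle[T,B],N\rangle\,N$.

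Finally I would fix the coefficient in the second identity. Using \eqref{2-1} together with antisymmetry, I compute $\langle[T,N],B\rangle=-\langle[N,T],B\rangle=-\langle N,[T,B]\rangle=-\langle[T,B],N\rangle$, whence $\langle[T,B],N\rangle=-2\tau_{G}$ and $[T,B]=-2\tau_{G}N$. There is no genuine obstacle in this argument; the only step that demands care is this last sign computation, where one must feed the invariance identity the bracket $[N,T]$ rather than $[T,N]$ in order to connect $\langle[T,B],N\rangle$ to the quantity $\langle[T,N],B\rangle$ that defines $\tau_{G}$.
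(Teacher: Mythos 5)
Your proof is correct and complete. Note that the paper itself supplies no argument for this proposition---it simply defers to \cite{zeki}---and your derivation is exactly the standard one: ad-invariance \eqref{2-1} plus antisymmetry of the bracket forces $\left\langle [X,Y],X\right\rangle =\left\langle [X,Y],Y\right\rangle =0$, three-dimensionality then pins $[T,N]$ to the line spanned by $B$ and $[T,B]$ to the line spanned by $N$, and one further application of \eqref{2-1} with antisymmetry gives $\left\langle [T,B],N\right\rangle =-\left\langle [T,N],B\right\rangle =-2\tau_{G}$. The only point worth making explicit (implicit in your argument and in the paper's conventions) is that \eqref{2-1} is stated for elements of $\mathfrak{g}$, while $T,N,B$ are fields along the curve; since they are expressed pointwise in a left-invariant orthonormal basis with function coefficients and the identity is pointwise trilinear, it applies to them without change.
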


\begin{theorem}
\label{teo 3.1}Let $\alpha:I\subset \mathbb{R\rightarrow}G$ and $\beta
:\overline{I}\subset \mathbb{R\rightarrow}G$ be a Bertrand curve couple with
arc-length parameter $s$ and $\overline{s},$ respectively. Then corresponding
points are a fixed distance apart for all $s\in I$, that is,
\[
d\left(  \alpha \left(  s\right)  ,\beta \left(  s\right)  \right)
=\text{constant, \  \ for all }s\in I
\]

\end{theorem}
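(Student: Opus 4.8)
The plan is to reduce the statement to showing that the smooth function $\lambda(s)$ appearing in $\beta(s)=\alpha(s)+\lambda(s)N(s)$ is constant. Once $\lambda$ is constant, the vector joining corresponding points is $\beta(s)-\alpha(s)=\lambda(s)N(s)$, and since $N$ is a unit principal normal field, this vector has length $|\lambda|$; hence $d(\alpha(s),\beta(s))=|\lambda|$ is independent of $s$.

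To isolate $\lambda'$, I would differentiate $\beta=\alpha+\lambda N$ in $s$. The one technical point is that the Serret--Frenet equations are written with the Levi-Civita connection $D$, whereas this differentiation produces the ordinary derivative $\overset{\cdot}{N}$. Using (2-3) in the form $\overset{\cdot}{N}=D_TN-\tfrac12[T,N]$, the bracket identity $[T,N]=2\tau_G B$ from Proposition \ref{prop 3.1}, and the definition $H=(\tau-\tau_G)/\varkappa$, I would rewrite
\[
\overset{\cdot}{N}=-\varkappa T+(\tau-\tau_G)B=-\varkappa T+\varkappa H B,
\]
so that
\[
\frac{d\beta}{ds}=(1-\lambda\varkappa)\,T+\lambda'\,N+\lambda\varkappa H\,B.
\]

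The final step invokes the Bertrand hypothesis. By Definition \ref{tan 3.1} the principal normal $\overline{N}$ of $\beta$ is linearly dependent on $N$ at corresponding points, so the unit tangent $\overline{T}$ of $\beta$ satisfies $\left\langle \overline{T},N\right\rangle=0$. Since $d\beta/ds=\psi'(s)\,\overline{T}$, taking the inner product of the displayed expression with $N$ annihilates the $T$- and $B$-components and forces $\lambda'(s)=0$. Thus $\lambda$ is constant, and the distance between corresponding points equals $|\lambda|$ for all $s\in I$. I expect the only genuine obstacle to be the bookkeeping in passing from $D_TN$ to $\overset{\cdot}{N}$ and absorbing the bracket term into the harmonic curvature, so that the $B$-component carries the factor $\varkappa H$ rather than $\tau$; the orthogonality $\overline{T}\perp N$ that yields $\lambda'=0$ is built directly into the definition of a Bertrand couple.
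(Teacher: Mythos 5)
Your proposal is correct and follows essentially the same route as the paper: differentiate $\beta=\alpha+\lambda N$, convert $\overset{\cdot}{N}$ via Eq. \eqref{2-3} and Proposition \ref{prop 3.1} so the $B$-component becomes $\lambda(\tau-\tau_G)=\lambda\varkappa H$, then use the linear dependence of $N_\beta$ and $N$ together with $T_\beta\perp N_\beta$ to force $\lambda'=0$. The paper phrases the last step as expanding $\left\langle T_\beta,N_\beta\right\rangle=0$ rather than pairing with $N$ directly, but this is the same computation.
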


\begin{proof}
From Definition \ref{tan 3.1}, we can simply write
\begin{equation}
\beta \left(  s\right)  =\alpha \left(  s\right)  +\lambda \left(  s\right)
N\left(  s\right)  \label{3-2}%
\end{equation}
Differentiating the Eq. \eqref{3-2} with respect to $s$ and using the Eq.
\eqref{2-3}, we get%
\begin{align*}
\frac{d\beta \left(  \overline{s}\right)  }{d\overline{s}}\psi^{\prime}\left(
s\right)   &  =\frac{d\alpha \left(  s\right)  }{ds}+\lambda^{\prime}\left(
s\right)  N\left(  s\right)  +\lambda \left(  s\right)  \overset{\cdot}{N}\\
&  =\left(  1-\lambda \left(  s\right)  \varkappa \left(  s\right)  \right)
T(s)+\lambda^{\prime}\left(  s\right)  N\left(  s\right)  +\lambda \left(
s\right)  \tau \left(  s\right)  B\left(  s\right)  -\dfrac{1}{2}\left[
T,N\right]
\end{align*}
and with the help of Proposition \ref{prop 3.1}, we obtain
\[
\frac{d\beta \left(  \overline{s}\right)  }{d\overline{s}}\psi^{\prime}\left(
s\right)  =\left(  1-\lambda \left(  s\right)  \varkappa \left(  s\right)
\right)  T(s)+\lambda^{\prime}\left(  s\right)  N\left(  s\right)
+\lambda \left(  s\right)  \left(  \left(  \tau-\tau_{G}\right)  \left(
s\right)  \right)  B\left(  s\right)
\]
or%
\[
T_{\beta}\left(  \overline{s}\right)  =\frac{1}{\psi^{\prime}\left(  s\right)
}\left[  \left(  1-\lambda \left(  s\right)  \varkappa \left(  s\right)
\right)  T(s)+\lambda^{\prime}\left(  s\right)  N\left(  s\right)
+\lambda \left(  s\right)  \left(  \left(  \tau-\tau_{G}\right)  \left(
s\right)  \right)  B\left(  s\right)  \right]  .
\]
And then, we know that $\left \{  N_{\beta}(\left(  \overline{s}\right)
),N\left(  s\right)  \right \}  $ is a linearly dependent set, so we have
\[
\left \langle T_{\beta}\left(  \overline{s}\right)  ,N_{\beta}\left(
\overline{s}\right)  \right \rangle =\frac{1}{\psi^{\prime}\left(  s\right)
}\left[
\begin{array}
[c]{c}%
\left(  1-\lambda \left(  s\right)  \varkappa \left(  s\right)  \right)
\left \langle T(s),N_{\beta}\left(  \overline{s}\right)  \right \rangle
+\lambda^{\prime}\left(  s\right)  \left \langle N(s),N_{\beta}\left(
\overline{s}\right)  \right \rangle \\
+\lambda \left(  s\right)  \tau \left(  s\right)  \left \langle B(s),N_{\beta
}\left(  \overline{s}\right)  \right \rangle
\end{array}
\right]
\]%
\[
\lambda^{\prime}\left(  s\right)  =0
\]
that is, $\lambda \left(  s\right)  $ is constant function on $I.$ This
completes the proof.
\end{proof}

\begin{theorem}
\label{teo 3.2}If $\alpha:I\subset \mathbb{R\rightarrow}G$ \ is a parametrized
Bertrand curve with arc length parameter $s$ and the Frenet apparatus $\left(
T,N,B,\varkappa,\tau \right)  $. Then, $\alpha$ satisfy the following equality%
\begin{equation}
\lambda \varkappa \left(  s\right)  +\mu \varkappa \left(  s\right)  H\left(
s\right)  =1,\text{ for all }s\in I \label{3-3}%
\end{equation}
where $\lambda$, $\mu$ are constants and $H$ is the harmonic curvature
function of the curve $\alpha.$
\end{theorem}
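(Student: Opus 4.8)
The plan is to exploit the fixed-angle property of Bertrand couples, which in this bi-invariant setting follows cleanly from the compatibility identity \eqref{2-1}. I would start from the expression for the unit tangent of $\beta$ obtained in the proof of Theorem \ref{teo 3.1}. Since that proof already establishes $\lambda'=0$, i.e. $\lambda$ is constant, and since $\tau-\tau_{G}=\varkappa H$ by Definition \ref{tan 2.4}, it reads
\[
\psi'(s)\,T_{\beta}=\left(1-\lambda\varkappa\right)T+\lambda\varkappa H\,B .
\]
In particular $T_{\beta}$ has no $N$-component, consistent with the Bertrand hypothesis $N_{\beta}=\pm N$.

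The key step is to prove that the angle $\theta$ between $T$ and $T_{\beta}$ is constant along $\alpha$. For this I would differentiate $\langle T,T_{\beta}\rangle$ with respect to $s$, using metric compatibility of $D$ together with the Serret--Frenet formula $D_{T}T=\varkappa N$:
\[
\frac{d}{ds}\langle T,T_{\beta}\rangle=\varkappa\langle N,T_{\beta}\rangle+\langle T,D_{\alpha'}T_{\beta}\rangle .
\]
The first term vanishes because $T_{\beta}\perp N_{\beta}=\pm N$. For the second, I write $D_{\alpha'}T_{\beta}=\dot{T}_{\beta}+\tfrac12[T,T_{\beta}]$ from \eqref{2-3}, where the componentwise derivative satisfies $\dot{T}_{\beta}=\psi'\varkappa_{\beta}N_{\beta}$ (from the Frenet equation $D_{\beta'}T_{\beta}=\varkappa_{\beta}N_{\beta}$ under $\overline{s}=\psi(s)$, since $[T_{\beta},T_{\beta}]=0$). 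Thus $\langle T,D_{\alpha'}T_{\beta}\rangle=\psi'\varkappa_{\beta}\langle T,N_{\beta}\rangle+\tfrac12\langle T,[T,T_{\beta}]\rangle$. Here $\langle T,N_{\beta}\rangle=0$ and, crucially, $\langle T,[T,T_{\beta}]\rangle=\langle[T,T],T_{\beta}\rangle=0$ by \eqref{2-1}. Hence $\langle T,T_{\beta}\rangle=\cos\theta$ is constant; this is exactly the point where the hypothesis of a bi-invariant metric does the work.

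With $\theta$ constant the conclusion is a matter of comparing coefficients. Since $T_{\beta}$ is a unit vector lying in the $\{T,B\}$-plane with
\[
T_{\beta}=\frac{1-\lambda\varkappa}{\psi'}\,T+\frac{\lambda\varkappa H}{\psi'}\,B,
\]
the coefficient of $T$ equals $\langle T,T_{\beta}\rangle=\cos\theta$ and is constant; as the squares of the two coefficients sum to $1$, the coefficient of $B$ is constant as well. Hence the ratio $(1-\lambda\varkappa)/(\lambda\varkappa H)$ is independent of $s$; writing it as $\mu/\lambda$ defines a constant $\mu$ (equal to $\lambda\cot\theta$), and clearing denominators yields $1-\lambda\varkappa=\mu\varkappa H$, that is \eqref{3-3}. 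Squaring and adding the two coefficient equations then reproduces $\psi'=\varkappa H\sqrt{\lambda^{2}+\mu^{2}}$, i.e. \eqref{3-1}, as a consistency check.

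I expect the main obstacle to be the middle step: justifying that $\theta$ is constant forces one to take the covariant derivative of $T_{\beta}$ along $\alpha$ rather than along $\beta$, and in a general manifold the extra bracket term $\tfrac12[T,T_{\beta}]$ would spoil the computation. The whole point is that bi-invariance makes $\langle T,[T,T_{\beta}]\rangle$ vanish identically, so the Euclidean fixed-angle property survives verbatim. A shorter but less transparent route, if one is willing to take \eqref{3-1} as given, is to impose $\|T_{\beta}\|=1$ on the displayed formula and substitute \eqref{3-1}; the resulting identity $(\mu\varkappa H)^{2}=(1-\lambda\varkappa)^{2}$ delivers \eqref{3-3} at once, up to a choice of sign.
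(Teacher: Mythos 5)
Your proof is correct, and its skeleton coincides with the paper's: both express $T_\beta$ in the $\{T,B\}$-plane as $\cos\theta\,T+\sin\theta\,B$, establish that $\theta$ is constant, and then read off \eqref{3-3} from the constancy of the coefficient ratio, setting $\mu=\lambda\cot\theta$ (the paper's $\mu=c\lambda$). The one place you genuinely diverge is the fixed-angle step. The paper dispatches it by differentiating \eqref{3-4} and invoking the linear dependence of $N_\beta$ and $N$: implicitly, the $T$- and $B$-components of $\psi'\varkappa_\beta N_\beta$ must vanish, forcing $\theta'=0$ (a computation that silently uses Proposition \ref{prop 3.1} for $\dot B$). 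You instead differentiate $\left\langle T,T_\beta\right\rangle$ along $\alpha$, using metric compatibility, the Frenet equation of $\beta$ transported through $\overline{s}=\psi(s)$, and the bi-invariance identity \eqref{2-1} to kill the bracket term via $\left\langle T,\left[T,T_\beta\right]\right\rangle=\left\langle\left[T,T\right],T_\beta\right\rangle=0$. This buys transparency: it fills in the detail the paper labels ``easily seen,'' and it isolates exactly where the bi-invariant hypothesis does the work, which the paper's packaging obscures. Both arguments share the same tacit nondegeneracy assumptions ($\lambda\neq0$ and $\sin\theta\neq0$, i.e.\ $\tau\not\equiv\tau_G$), so neither is more general. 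Your closing observation that squaring and adding the coefficient relations recovers \eqref{3-1} is also a worthwhile sanity check, since the paper asserts \eqref{3-1} without derivation.
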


\begin{proof}
Let $\alpha:I\subset \mathbb{R\rightarrow}G$ be a parametrized Bertrand curve
with arc length parameter $s$ then we can write
\[
\beta \left(  s\right)  =\alpha \left(  s\right)  +\lambda N\left(  s\right)
\]
Differentiating the above equality with respect to $s$ and by using the Frenet
equations, we get%
\begin{align*}
\frac{d\beta \left(  \overline{s}\right)  }{d\overline{s}}\psi^{\prime}\left(
s\right)   &  =\frac{d\alpha \left(  s\right)  }{ds}+\lambda \left(  s\right)
\overset{\cdot}{N}\\
&  =\left(  1-\lambda \left(  s\right)  \varkappa \left(  s\right)  \right)
T(s)+\lambda \left(  s\right)  \tau \left(  s\right)  B\left(  s\right)
-\dfrac{1}{2}\left[  T,N\right]
\end{align*}
and with the help of Proposition \ref{prop 3.1}, we obtain
\[
T_{\beta}\left(  \overline{s}\right)  =\frac{\left(  1-\lambda \varkappa \left(
s\right)  \right)  }{\psi^{\prime}\left(  s\right)  }T(s)+\frac{\lambda \left(
\left(  \tau-\tau_{G}\right)  \left(  s\right)  \right)  }{\psi^{\prime
}\left(  s\right)  }B\left(  s\right)  .
\]
As $\left \{  N_{\beta}(\left(  \overline{s}\right)  ),N\left(  s\right)
\right \}  $ is a linearly dependent set, we can write
\begin{equation}
T_{\beta}\left(  \overline{s}\right)  =\cos \theta \left(  s\right)
T(s)+\sin \theta \left(  s\right)  B(s) \label{3-4}%
\end{equation}
where
\[
\cos \theta \left(  s\right)  =\frac{\left(  1-\lambda \varkappa \left(  s\right)
\right)  }{\psi^{\prime}\left(  s\right)  },
\]%
\[
\sin \theta \left(  s\right)  =\frac{\lambda \left(  \left(  \tau-\tau
_{G}\right)  \left(  s\right)  \right)  }{\psi^{\prime}\left(  s\right)  }.
\]
If we differentiate the Eq. \eqref{3-4} and consider $\left \{  N_{\beta
}\left(  \overline{s}\right)  ,N\left(  s\right)  \right \}  $ is a linearly
dependent set we can easily see that $\theta$ is a constant function. So, we
obtain
\[
\frac{\cos \theta}{\sin \theta}=\frac{1-\lambda \varkappa \left(  s\right)
}{\lambda \left(  \left(  \tau-\tau_{G}\right)  \left(  s\right)  \right)  }%
\]
or taking $c=\dfrac{\cos \theta}{\sin \theta},$ we get
\[
\lambda \varkappa \left(  s\right)  +c\lambda \left(  \left(  \tau-\tau
_{G}\right)  \left(  s\right)  \right)  =1.
\]
Then denoting $\mu=c\lambda=$costant and using the Definition \ref{tan 2.4},
we have%
\[
\lambda \varkappa \left(  s\right)  +\mu \varkappa \left(  s\right)  H\left(
s\right)  =1,\text{ for all }s\in I
\]
which completes the proof.
\end{proof}

\begin{corollary}
The measure of the angle between the tangent vector fields of the Bertrand
curve couple $\left(  \alpha,\beta \right)  $ is constant.
\end{corollary}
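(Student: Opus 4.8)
The plan is to start from the defining relation $\beta(s)=\alpha(s)+\lambda N(s)$, where by Theorem~\ref{teo 3.1} the proportionality function $\lambda$ is already known to be a constant. Differentiating with respect to $s$ and invoking \eqref{2-3}, I first need to rewrite $\overset{\cdot}{N}$ in terms of the Frenet frame. Since $D_{T}N=\overset{\cdot}{N}+\tfrac{1}{2}[T,N]$ and the Serret--Frenet relation gives $D_{T}N=-\varkappa T+\tau B$, Proposition~\ref{prop 3.1} (which yields $[T,N]=2\tau_{G}B$) lets me deduce $\overset{\cdot}{N}=-\varkappa T+(\tau-\tau_{G})B$. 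Substituting this produces
\[
\psi^{\prime}(s)\,T_{\beta}(\overline{s})=\bigl(1-\lambda\varkappa(s)\bigr)T(s)+\lambda\bigl(\tau-\tau_{G}\bigr)(s)\,B(s),
\]
in which the crucial feature is that the $N$-component has dropped out entirely.

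The next step exploits the Bertrand hypothesis directly. Because $\{N_{\beta}(\overline{s}),N(s)\}$ is linearly dependent, the unit tangent $T_{\beta}$ must be orthogonal to $N$ and hence lies in the plane spanned by $\{T,B\}$. I would therefore write $T_{\beta}=\cos\theta(s)\,T(s)+\sin\theta(s)\,B(s)$ with
\[
\cos\theta=\frac{1-\lambda\varkappa}{\psi^{\prime}},\qquad\sin\theta=\frac{\lambda(\tau-\tau_{G})}{\psi^{\prime}},
\]
which is consistent with the expression above and automatically encodes the relation \eqref{3-1} for $\psi^{\prime}$.

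The heart of the argument is showing that $\theta$ is constant. For this I would differentiate $T_{\beta}=\cos\theta\,T+\sin\theta\,B$ along the curve and use that the covariant derivative of $T_{\beta}$ is $\varkappa_{\beta}N_{\beta}$, which by the linear-dependence hypothesis is parallel to $N$. Applying the Serret--Frenet formulas $D_{T}T=\varkappa N$ and $D_{T}B=-\tau N$, the derivative acquires a $T$-component proportional to $-\theta^{\prime}\sin\theta$ and a $B$-component proportional to $\theta^{\prime}\cos\theta$; since the result may have only an $N$-component, both must vanish, forcing $\theta^{\prime}=0$. With $\theta$ constant I then take the quotient of $\cos\theta$ and $\sin\theta$, set $c=\cot\theta$, clear denominators to get $\lambda\varkappa+c\lambda(\tau-\tau_{G})=1$, and finally put $\mu=c\lambda$ and use $\tau-\tau_{G}=\varkappa H$ from Definition~\ref{tan 2.4} to arrive at $\lambda\varkappa+\mu\varkappa H=1$. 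I expect the main obstacle to be precisely this constancy step: one must be careful about whether the covariant differentiation is taken along $\alpha$ or along $\beta$ and must justify, via the Lie-group covariant-derivative formula \eqref{2-3} together with Proposition~\ref{prop 3.1}, that no stray $\tau_{G}$-terms reintroduce a transverse component that would spoil the cancellation.
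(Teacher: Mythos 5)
Your proposal is correct and follows essentially the same route as the paper: the paper's proof of this corollary simply points back to the proof of Theorem \ref{teo 3.2}, in which $T_{\beta}=\cos\theta\,T+\sin\theta\,B$ is differentiated and the linear dependence of $\{N_{\beta},N\}$ forces $\theta^{\prime}=0$, exactly as you argue. Your treatment is in fact more careful than the paper's (which merely asserts ``we can easily see that $\theta$ is constant''), since you verify via Proposition \ref{prop 3.1} that the Lie-bracket terms contribute only $N$-components and so cannot spoil the vanishing of the $T$- and $B$-components.
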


\begin{proof}
It is obvious from the proof of above Theorem.
\end{proof}

\begin{remark}
It is unknown whether the reverse of the above Theorem. Because, for the proof
of the reverse we must consider a special Frenet curve $\beta \left(  s\right)
=\alpha \left(  s\right)  +\lambda N\left(  s\right)  $ in its proof. So, we
give the following Theorem.
\end{remark}

\begin{theorem}
\label{teo 3.3}Let $\alpha:I\subset \mathbb{R\rightarrow}G$ \ be a parametrized
Bertrand curve whose curvature functions $\varkappa$ and harmonic curvature
function $H$ of the curve $\alpha$ satisfy $\lambda \varkappa \left(  s\right)
+\mu \varkappa \left(  s\right)  H\left(  s\right)  =1, $ for all $s\in I$. If
the curve $\beta$ given by $\beta \left(  s\right)  =\alpha \left(  s\right)
+\lambda N\left(  s\right)  $ for all $s\in I$ is a special Frenet curve, then
$\left(  \alpha,\beta \right)  $ is the Bertrand curve couple.
\end{theorem}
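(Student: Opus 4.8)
The plan is to prove the converse of Theorem \ref{teo 3.2} by reversing its computation: starting from the hypothesis $\lambda\varkappa+\mu\varkappa H=1$, I would take the candidate mate $\beta(s)=\alpha(s)+\lambda N(s)$ (with $\lambda$ the constant from the hypothesis) and verify directly that the principal normal of $\beta$ is parallel to $N$, which is precisely the defining condition of a Bertrand couple in Definition \ref{tan 3.1}. The only new ingredient beyond Theorem \ref{teo 3.2} is that I must push the Frenet computation for $\beta$ one derivative further, so as to actually exhibit $N_{\beta}$ rather than merely reading off $T_{\beta}$.

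First I would differentiate $\beta$ with respect to $s$, using \eqref{2-3} together with the Serret--Frenet formulas and Proposition \ref{prop 3.1} exactly as in the proof of Theorem \ref{teo 3.2}, to obtain
\[
\frac{d\beta}{d\overline{s}}\,\psi'(s)=\left(1-\lambda\varkappa\right)T+\lambda\left(\tau-\tau_{G}\right)B.
\]
At this point I would invoke the hypothesis in the two forms $1-\lambda\varkappa=\mu\varkappa H$ and, via Definition \ref{tan 2.4}, $\tau-\tau_{G}=\varkappa H$, which collapses the right-hand side to $\varkappa H\left(\mu T+\lambda B\right)$. Reading off norms then forces $\psi'(s)=\varkappa H\sqrt{\lambda^{2}+\mu^{2}}$, recovering \eqref{3-1} and yielding the unit tangent
\[
T_{\beta}=\frac{1}{\sqrt{\lambda^{2}+\mu^{2}}}\left(\mu T+\lambda B\right).
\]

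The decisive step is to differentiate $T_{\beta}$ once more along $\beta$. Applying \eqref{2-3} to the curve $\beta$ and using $[T_{\beta},T_{\beta}]=0$, the covariant derivative $D_{T_{\beta}}T_{\beta}$ equals the component-wise derivative of $T_{\beta}$ with respect to $\overline{s}$. Since $\dot{T}=\varkappa N$ and $\dot{B}=-(\tau-\tau_{G})N$ (both read off from \eqref{2-3} and Proposition \ref{prop 3.1}) are each proportional to $N$, differentiating $T_{\beta}$ and applying the chain rule through $\psi'$ leaves
\[
D_{T_{\beta}}T_{\beta}=\frac{\varkappa\left(\mu-\lambda H\right)}{\psi'\sqrt{\lambda^{2}+\mu^{2}}}\,N.
\]
As this is a scalar multiple of $N$, the unit principal normal $N_{\beta}$ is linearly dependent on $N$, which is exactly the requirement of Definition \ref{tan 3.1}; hence $(\alpha,\beta)$ is a Bertrand couple.

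I expect the genuine obstacle to be that $N_{\beta}$ must exist at all: the extraction $N_{\beta}=D_{T_{\beta}}T_{\beta}/\varkappa_{\beta}$ is legitimate only when $\psi'\neq0$ and $\varkappa_{\beta}\neq0$. This is exactly why the statement carries the assumption that $\beta$ is a special Frenet curve, guaranteeing the regularity and nonvanishing curvature of $\beta$ so that the computed multiple of $N$ is genuinely the principal normal rather than a degenerate artifact. Once this regularity is granted, the argument is a direct reversal of the computation in Theorem \ref{teo 3.2}.
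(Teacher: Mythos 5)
Your proposal is correct and follows essentially the same route as the paper: use the hypothesis $\lambda\varkappa+\mu\varkappa H=1$ (together with $\tau-\tau_{G}=\varkappa H$) to reduce the tangent of $\beta$ to the constant-coefficient combination $T_{\beta}=\frac{1}{\sqrt{\lambda^{2}+\mu^{2}}}\left(\mu T+\lambda B\right)$, then differentiate once more to get $\varkappa_{\beta}N_{\beta}\psi'=\frac{\varkappa\left(\mu-\lambda H\right)}{\sqrt{\lambda^{2}+\mu^{2}}}N$ and invoke Definition \ref{tan 3.1}. The only (harmless) difference is presentational: you derive the relation $\psi'=\varkappa H\sqrt{\lambda^{2}+\mu^{2}}$ from the computation and make explicit why the special-Frenet hypothesis is needed, whereas the paper simply cites Eq. \eqref{3-1}.
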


\begin{proof}
Let $\alpha:I\subset \mathbb{R\rightarrow}G$ \ be a parametrized Bertrand curve
whose curvature function $\varkappa$ and harmonic curvature function $H$ of
the curve $\alpha$ satisfy $\lambda \varkappa \left(  s\right)  +\mu
\varkappa \left(  s\right)  H\left(  s\right)  =1$ for all $s\in I$. If the
curve $\beta$ given by $\beta \left(  s\right)  =\alpha \left(  s\right)
+\lambda N\left(  s\right)  $ for all $s\in I$ is a special Frenet curve, then
differentiating this equality with respect to $s$ and by using the Eq.
\eqref{3-1} with the equation $\lambda \varkappa \left(  s\right)  +\mu
\varkappa \left(  s\right)  H\left(  s\right)  =1$, we have%
\begin{equation}
T_{\beta}\left(  \overline{s}\right)  =\frac{\mu}{\sqrt{\lambda^{2}+\mu^{2}}%
}T(s)+\frac{\lambda}{\sqrt{\lambda^{2}+\mu^{2}}}B\left(  s\right)  .
\label{3-5}%
\end{equation}
Then, if we differentiate the last equation with respect to $s$ and by using
the Frenet formulas we obtain%
\begin{equation}
\varkappa_{\beta}\left(  \overline{s}\right)  N_{\beta}\left(  \overline
{s}\right)  \psi^{\prime}\left(  s\right)  =\frac{\varkappa \left(  s\right)
}{\sqrt{\lambda^{2}+\mu^{2}}}\left(  \mu-\lambda H\left(  s\right)  \right)
N\left(  s\right)  . \label{3-6}%
\end{equation}
Thus, for each $s\in I,$ the vector field $N_{\beta}\left(  \overline
{s}\right)  $ of $\beta$ is linearly dependent the vector field $N\left(
s\right)  $ of $\alpha$ at corresponding point under the bijection from
$\alpha$ to $\beta.$ This completes the proof.
\end{proof}

\begin{proposition}
\label{prop 3.2}Let $\alpha:I\subset \mathbb{R\rightarrow}G$\ be an
arc-lenghted Bertrand curve with the Frenet vector fields $\left \{
T,N,B\right \}  $ and $\beta:\overline{I}\subset \mathbb{R\rightarrow}G$ be a
Bertrand mate of $\alpha$ with the Frenet vector fields $\left \{  T_{\beta
},N_{\beta},B_{\beta}\right \}  .$ Then $\tau_{G_{\beta}}=\tau_{G}$ for the
curves $\alpha$ and $\beta$ where $\tau_{G}=\frac{1}{2}\left \langle \left[
T,N\right]  ,B\right \rangle $ and $\tau_{G\beta}=\frac{1}{2}\left \langle
\left[  T_{\beta},N_{\beta}\right]  ,B_{\beta}\right \rangle .$
\end{proposition}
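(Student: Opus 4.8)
The plan is to compute the Frenet frame $\left\{ T_{\beta},N_{\beta},B_{\beta}\right\}$ of the mate curve $\beta$ explicitly in terms of the frame $\left\{ T,N,B\right\}$ of $\alpha$, and then to read off $\tau_{G\beta}$ directly from Proposition \ref{prop 3.1} applied to $\beta$. From the proof of Theorem \ref{teo 3.2} I already have the tangent vector field $T_{\beta}\left( \overline{s}\right) =\cos\theta\,T(s)+\sin\theta\,B(s)$ with $\theta$ a constant, and by the defining property of a Bertrand couple the principal normals are linearly dependent unit vectors, so $N_{\beta}=\epsilon N$ with $\epsilon=\pm 1$. Completing this to the positively oriented orthonormal frame via $B_{\beta}=T_{\beta}\times N_{\beta}$ gives $B_{\beta}=\epsilon\left( -\sin\theta\,T+\cos\theta\,B\right)$, the unique unit vector orthogonal to both $T_{\beta}$ and $N_{\beta}$ consistent with the chosen orientation.

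The computational heart of the argument is the Lie bracket $\left[ T_{\beta},N_{\beta}\right]$. Expanding by bilinearity, $\left[ T_{\beta},N_{\beta}\right] =\epsilon\cos\theta\left[ T,N\right] +\epsilon\sin\theta\left[ B,N\right]$. Proposition \ref{prop 3.1} supplies $\left[ T,N\right] =2\tau_{G}B$, but the bracket $\left[ B,N\right]$ is not listed there, so I first derive it. Using the bi-invariance identity \eqref{2-1} together with $\left[ T,B\right] =-2\tau_{G}N$ from Proposition \ref{prop 3.1}, I obtain $\langle \left[ N,B\right] ,T\rangle =\langle N,\left[ B,T\right] \rangle =2\tau_{G}$, while the components along $N$ and $B$ vanish by the same identity together with $\left[ N,N\right] =\left[ B,B\right] =0$; hence $\left[ N,B\right] =2\tau_{G}T$ and $\left[ B,N\right] =-2\tau_{G}T$.

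Substituting these brackets back yields $\left[ T_{\beta},N_{\beta}\right] =2\epsilon\tau_{G}\left( -\sin\theta\,T+\cos\theta\,B\right) =2\tau_{G}B_{\beta}$. On the other hand, Proposition \ref{prop 3.1} applied to the curve $\beta$ reads $\left[ T_{\beta},N_{\beta}\right] =2\tau_{G\beta}B_{\beta}$. Comparing the two expressions and using that $B_{\beta}$ is a unit vector gives $\tau_{G\beta}=\tau_{G}$, as claimed. Alternatively one may avoid invoking Proposition \ref{prop 3.1} for $\beta$ and simply form $\frac{1}{2}\langle \left[ T_{\beta},N_{\beta}\right] ,B_{\beta}\rangle$ directly; the factor $\epsilon^{2}=1$ together with $\sin^{2}\theta+\cos^{2}\theta=1$ makes the calculation collapse to $\tau_{G}$.

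I expect the main obstacles to be two bookkeeping points rather than any deep difficulty. The first is establishing the missing structural bracket $\left[ N,B\right] =2\tau_{G}T$ from the bi-invariance relation \eqref{2-1}, since Proposition \ref{prop 3.1} only records $\left[ T,N\right]$ and $\left[ T,B\right]$. The second is pinning down the orientation of $B_{\beta}$: choosing the right-handed convention $B_{\beta}=T_{\beta}\times N_{\beta}$ is precisely what produces $+\tau_{G}$ rather than $-\tau_{G}$, and the sign $\epsilon$ in $N_{\beta}=\epsilon N$ must be carried through the whole computation, though it cancels in the end because it enters squared.
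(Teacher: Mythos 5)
Your proof is correct and follows essentially the same route as the paper's: express $T_{\beta}$ and $B_{\beta}$ in the frame of $\alpha$ (your $\cos\theta$, $\sin\theta$ are exactly $\mu/\sqrt{\lambda^{2}+\mu^{2}}$, $\lambda/\sqrt{\lambda^{2}+\mu^{2}}$ from Eq. \eqref{3-5} and \eqref{3-7}), then evaluate $\tau_{G\beta}=\frac{1}{2}\left\langle \left[ T_{\beta},N_{\beta}\right] ,B_{\beta}\right\rangle$ by bilinearity of the bracket. The only difference is that you spell out the step the paper leaves implicit behind ``using the equalities of the Frenet vector fields,'' namely the derivation of $\left[ N,B\right] =2\tau_{G}T$ from bi-invariance and the sign bookkeeping for $N_{\beta}=\pm N$, which is a worthwhile clarification rather than a different method.
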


\begin{proof}
Let $\alpha:I\subset \mathbb{R\rightarrow}G$\ be an arc-lenghted Bertrand curve
with the Frenet vector fields $\left \{  T,N,B\right \}  $ and $\beta
:\overline{I}\subset \mathbb{R\rightarrow}G$ be a Bertrand mate of $\alpha$
with with the Frenet vector fields $\left \{  T_{\beta},N_{\beta},B_{\beta
}\right \}  .$ From the Eq. \eqref{3-5} and considering $N_{\beta}=\mp N$ we
have%
\begin{equation}
B_{\beta}\left(  \overline{s}\right)  =-\frac{\lambda}{\sqrt{\lambda^{2}%
+\mu^{2}}}T(s)+\frac{\mu}{\sqrt{\lambda^{2}+\mu^{2}}}B\left(  s\right)  .
\label{3-7}%
\end{equation}
Since $\tau_{G\beta}=\frac{1}{2}\left \langle \left[  T_{\beta},N_{\beta
}\right]  ,B_{\beta}\right \rangle $, using the equalities of the Frenet vector
fields $T_{\beta},N_{\beta}$ and $B_{\beta}$ we obtain $\tau_{G\beta}=\tau
_{G}.$ Which completes the proof.
\end{proof}

\begin{theorem}
\label{teo 3.4}Let $\alpha:I\subset \mathbb{R\rightarrow}G$ \ be a parametrized
Bertrand curve with curvature functions $\varkappa$, $\tau$ and $\beta
:\overline{I}\subset \mathbb{R\rightarrow}G$ be a Bertrand mate of $\alpha$
with curvatures functions $\varkappa_{\beta}$, $\tau_{\beta}.$ Then the
relations between these curvature functions are%
\begin{align}
\varkappa_{\beta}\left(  \overline{s}\right)   &  =\frac{\mu \varkappa \left(
s\right)  -\lambda \varkappa \left(  s\right)  H\left(  s\right)  }{\left(
\lambda^{2}+\mu^{2}\right)  H\left(  s\right)  },\label{3-8}\\
\tau_{\beta}\left(  \overline{s}\right)   &  =\frac{\lambda \varkappa \left(
s\right)  +\mu \varkappa \left(  s\right)  H\left(  s\right)  }{\left(
\lambda^{2}+\mu^{2}\right)  H\left(  s\right)  }+\tau_{G} \label{3-9}%
\end{align}

\end{theorem}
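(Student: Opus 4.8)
The plan is to extract the Frenet curvatures of the mate $\beta$ directly from the expressions already obtained for its moving frame, namely equation \eqref{3-5} for $T_\beta$, equation \eqref{3-6} for $\varkappa_\beta N_\beta$, and equation \eqref{3-7} for $B_\beta$, all written in the frame $\{T,N,B\}$ of $\alpha$, together with the reparametrization rule \eqref{3-1}. The computational engine is the fact that in a Lie group with bi-invariant metric the \emph{ordinary} derivatives of the Frenet vectors reduce to $\dot T=\varkappa N$, $\dot N=-\varkappa T+\varkappa H B$ and $\dot B=-\varkappa H N$. These follow from the Serret--Frenet formulas, the connection formula $D_{\alpha'}W=\dot W+\tfrac12[T,W]$, and Proposition \ref{prop 3.1}: the brackets $[T,N]=2\tau_G B$ and $[T,B]=-2\tau_G N$ convert the genuine torsion $\tau$ into the harmonic combination $\tau-\tau_G=\varkappa H$.

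For the curvature I would begin from \eqref{3-6}. Since $\{N_\beta,N\}$ is linearly dependent and both vectors have unit length, I may fix the orientation so that $N_\beta=N$; comparing scalar coefficients then gives $\varkappa_\beta\,\psi'=\varkappa(\mu-\lambda H)/\sqrt{\lambda^2+\mu^2}$. Inserting $\psi'=\varkappa H\sqrt{\lambda^2+\mu^2}$ from \eqref{3-1} and simplifying yields the stated formula \eqref{3-8}.

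For the torsion I would differentiate \eqref{3-7}. Because $\lambda$ and $\mu$ are constants, the coefficients of $T$ and $B$ in $B_\beta$ are constant, so $\dot B_\beta=\tfrac{1}{\psi'}(-b\,\dot T+a\,\dot B)$ with $a=\mu/\sqrt{\lambda^2+\mu^2}$ and $b=\lambda/\sqrt{\lambda^2+\mu^2}$; using the reduced formulas for $\dot T$ and $\dot B$ this becomes a multiple of $N$. On the other hand, the third Frenet equation for $\beta$ combined with Proposition \ref{prop 3.1} applied to $\beta$ gives $\dot B_\beta=-(\tau_\beta-\tau_{G\beta})N_\beta$, and Proposition \ref{prop 3.2} identifies $\tau_{G\beta}=\tau_G$. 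Equating the two expressions for $\dot B_\beta$, solving for $\tau_\beta-\tau_G$, and substituting \eqref{3-1} produces \eqref{3-9}.

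The step I expect to require the most care is the bookkeeping of covariant versus ordinary differentiation. The torsion $\tau_\beta$ is defined through the covariant equation $D_{T_\beta}B_\beta=-\tau_\beta N_\beta$, whereas the calculation proceeds with ordinary derivatives; the difference is the single bracket term $\tfrac12[T_\beta,B_\beta]=-\tau_{G\beta}N_\beta$, which must be inserted exactly once and rewritten as $-\tau_G N_\beta$ by Proposition \ref{prop 3.2}. This lone $\tau_G$-shift is precisely what turns the harmonic quantity $\tau_\beta-\tau_G$ delivered by the computation into the genuine torsion $\tau_\beta$ recorded in \eqref{3-9}, and it is the only point where the Lie-group structure, rather than a purely Euclidean argument, is essential.
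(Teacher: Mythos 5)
Your proposal is correct and follows essentially the same route as the paper's own proof: you obtain \eqref{3-8} by comparing coefficients (equivalently, taking norms) in \eqref{3-6} and substituting \eqref{3-1}, and you obtain \eqref{3-9} by differentiating \eqref{3-7}, reducing $\dot T$ and $\dot B$ through Proposition \ref{prop 3.1}, invoking the Frenet equation of $\beta$ together with Proposition \ref{prop 3.2}, which is exactly the paper's computation. The only caveat, inherited from the paper rather than introduced by you, is that this computation literally yields $\varkappa_{\beta}=\dfrac{\mu-\lambda H}{\left(\lambda^{2}+\mu^{2}\right)H}$ and $\tau_{\beta}-\tau_{G}=\dfrac{\lambda+\mu H}{\left(\lambda^{2}+\mu^{2}\right)H}$ (the factor $\varkappa$ cancels against $\psi^{\prime}=\varkappa H\sqrt{\lambda^{2}+\mu^{2}}$), so the extra factor $\varkappa$ appearing in the displayed formulas \eqref{3-8}--\eqref{3-9} is an inconsistency between the theorem's statement and its proof, not a gap in your argument.
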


\begin{proof}
If we take the norm of the Eq. \eqref{3-6} and use the Eq. \eqref{3-1}, we get
the Eq. \eqref{3-8}. Then differentiating the Eq. \eqref{3-7}\ and using the
Frenet formulas, we have%
\begin{align*}
\overset{\cdot}{B_{\beta}}\left(  \overline{s}\right)  \psi^{\prime}\left(
s\right)   &  =-\frac{\lambda}{\sqrt{\lambda^{2}+\mu^{2}}}\overset{\cdot}%
{T}(s)+\frac{\mu}{\sqrt{\lambda^{2}+\mu^{2}}}\overset{\cdot}{B}\left(
s\right)  ,\\
&  =-\frac{\lambda}{\sqrt{\lambda^{2}+\mu^{2}}}\varkappa(s)N(s)+\frac{\mu
}{\sqrt{\lambda^{2}+\mu^{2}}}\left(  -\tau(s)N(s)-\frac{1}{2}\left[
T,B\right]  \right)
\end{align*}
In the above equality, using the Eq. \eqref{3-1} and the Proposition
\ref{prop 3.1}, we get%
\[
\left(  \tau_{\beta}-\tau_{G\beta}\right)  N_{\beta}\left(  \overline
{s}\right)  =\frac{1}{\varkappa H\left(  \lambda^{2}+\mu^{2}\right)  }\left(
\lambda \varkappa+\mu \varkappa H\right)  N(s).
\]
If we take the norm of the last equation and use the Proposition
\ref{prop 3.2}, we get the Eq. \eqref{3-9}. Which completes the proof.
\end{proof}

\begin{theorem}
\label{teo 3.5}Let $\alpha:I\subset \mathbb{R\rightarrow}G$ \ be a parametrized
curve with Frenet apparatus $\left \{  T,N,B,\varkappa,\tau \right \}  $ and
$\beta:\overline{I}\subset \mathbb{R\rightarrow}G$ be a curve with Frenet
apparatus $\left \{  T_{\beta},N_{\beta},B_{\beta},\varkappa_{\beta}%
,\tau_{\beta}\right \}  .$ If $\left(  \alpha,\beta \right)  $ is a Bertrand
curve couple then $\varkappa \varkappa_{\beta}HH_{\beta}$ is a constant function.
\end{theorem}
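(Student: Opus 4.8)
The strategy is to group the four factors as $(\varkappa H)(\varkappa_\beta H_\beta)$ and to read each block as a (signed) torsion defect, so that the Bertrand characterization of Theorem \ref{teo 3.2} can be applied in a single stroke rather than by manipulating the curvature formulas term by term.

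First I would invoke the definition of the harmonic curvature function (Definition \ref{tan 2.4}) for both members of the couple: for $\alpha$ it gives $\varkappa H=\tau-\tau_{G}$, and for the mate $\beta$ it gives $\varkappa_{\beta}H_{\beta}=\tau_{\beta}-\tau_{G\beta}$. Hence the quantity to be controlled factors as
\[
\varkappa \varkappa_{\beta}HH_{\beta}=(\varkappa H)\,(\tau_{\beta}-\tau_{G\beta}).
\]
Next, Proposition \ref{prop 3.2} identifies the two intrinsic torsions, $\tau_{G\beta}=\tau_{G}$, so that $\tau_{\beta}-\tau_{G\beta}=\tau_{\beta}-\tau_{G}$, and this difference is exactly what Theorem \ref{teo 3.4} (equation \eqref{3-9}) computes in terms of the data of $\alpha$.

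I would then substitute that torsion relation and cancel the common factor $\varkappa H$ against the denominator of $\tau_{\beta}-\tau_{G}$. The product collapses to $\dfrac{\lambda \varkappa+\mu \varkappa H}{\lambda^{2}+\mu^{2}}$. Since $(\alpha,\beta)$ is a Bertrand couple, Theorem \ref{teo 3.2} supplies $\lambda \varkappa+\mu \varkappa H=1$, and therefore
\[
\varkappa \varkappa_{\beta}HH_{\beta}=\frac{1}{\lambda^{2}+\mu^{2}},
\]
which is assembled only from the constants $\lambda$ and $\mu$ and is thus constant along $I$.

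The step I expect to be delicate is the bookkeeping of the $\varkappa$ and $H$ factors inside Theorem \ref{teo 3.4}: the cancellation yields a genuine constant only if the $\varkappa H$ in the denominator of $\tau_{\beta}-\tau_{G}$ matches exactly the $\varkappa H$ extracted in the grouping above. Carrying a stray $\varkappa$ through this step would leave a non-constant factor $\varkappa/(\lambda^{2}+\mu^{2})$ and defeat the claim, so I would first re-derive the torsion relation of Theorem \ref{teo 3.4} in its normalized form $\tau_{\beta}-\tau_{G}=\dfrac{\lambda \varkappa+\mu \varkappa H}{(\lambda^{2}+\mu^{2})\varkappa H}$ (equivalently $\dfrac{1}{(\lambda^{2}+\mu^{2})\varkappa H}$ after applying Theorem \ref{teo 3.2}) and only then multiply by $\varkappa H$.
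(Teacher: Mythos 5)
Your proposal is correct, but it takes a genuinely different route from the paper's own proof. The paper argues by role reversal: it writes the inverse relation $\alpha\left(s\right)=\beta\left(s\right)-\lambda N_{\beta}\left(\overline{s}\right)$ (Eq. \eqref{3-10}) and re-runs the method of Theorem \ref{teo 3.2} with $\alpha$ and $\beta$ exchanged, so that $\varkappa H$ and $\varkappa_{\beta}H_{\beta}$ are each expressed through the constant angle between the tangents and the mutually inverse stretch factors $\psi^{\prime}$ and $1/\psi^{\prime}$, which cancel in the product. You instead assemble the claim from results already on record: $\varkappa_{\beta}H_{\beta}=\tau_{\beta}-\tau_{G\beta}$ (Definition \ref{tan 2.4}), $\tau_{G\beta}=\tau_{G}$ (Proposition \ref{prop 3.2}), the torsion relation of Theorem \ref{teo 3.4}, and finally $\lambda\varkappa+\mu\varkappa H=1$ (Theorem \ref{teo 3.2}). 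Your flagged ``delicate step'' is indeed the crux, and you resolve it correctly: Eq. \eqref{3-9} as printed, with denominator $\left(\lambda^{2}+\mu^{2}\right)H$, is a misprint; the paper's own proof of Theorem \ref{teo 3.4} arrives at $\left(\tau_{\beta}-\tau_{G\beta}\right)N_{\beta}=\frac{\lambda\varkappa+\mu\varkappa H}{\left(\lambda^{2}+\mu^{2}\right)\varkappa H}N$, so the denominator must carry $\varkappa H$, not $H$. Had you used \eqref{3-9} literally, the product would come out as the non-constant $\varkappa/\left(\lambda^{2}+\mu^{2}\right)$; with the corrected (normalized) form it collapses, via Theorem \ref{teo 3.2}, to $1/\left(\lambda^{2}+\mu^{2}\right)$, exactly as you say. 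Each approach buys something: yours yields the explicit value of the constant, $1/\left(\lambda^{2}+\mu^{2}\right)$ --- the Lie-group analogue of the classical $\sin^{2}\theta/\lambda^{2}$ for the product of torsions of a Euclidean Bertrand couple --- and exposes the misprint in \eqref{3-9}, whereas the paper's role-reversal argument never invokes \eqref{3-9} and is therefore immune to that error, though it is stated only as a sketch.
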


\begin{proof}
We assume that $\left(  \alpha,\beta \right)  $ is a Bertrand curve couple.
Then we can write%
\begin{equation}
\alpha \left(  s\right)  =\beta \left(  s\right)  -\lambda \left(  s\right)
N_{\beta}\left(  \overline{s}\right)  . \label{3-10}%
\end{equation}
If we use the similar method in the proof of Theorem \ref{teo 3.2} and
consider the Eq. \eqref{3-10}, then we can easily see that $\varkappa
\varkappa_{\beta}HH_{\beta}$ is a constant function.
\end{proof}

\begin{theorem}
\label{teo 3.6}Let $\alpha:I\subset \mathbb{R\rightarrow}G$ \ be a parametrized
Bertrand curve with Frenet apparatus $\left \{  T,N,B,\varkappa,\tau \right \}  $
and $\beta:\overline{I}\subset \mathbb{R\rightarrow}G$ be a Bertrand mate of
the curve $\alpha$ with Frenet apparatus $\left \{  T_{\beta},N_{\beta
},B_{\beta},\varkappa_{\beta},\tau_{\beta}\right \}  . $ Then $\alpha$ is a
slant helix if and only if $\beta$ is a slant helix.
\end{theorem}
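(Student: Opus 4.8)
The plan is to detect the slant--helix property through the harmonic--curvature invariant of Theorem~\ref{teo 2.2} and to show that this invariant is carried from $\alpha$ to its mate $\beta$ by the curvature relations already established. By Theorem~\ref{teo 2.2}, $\alpha$ is a slant helix iff $\sigma_{N}=\dfrac{\varkappa(1+H^{2})^{3/2}}{H^{\prime}}$ is constant, and $\beta$ is a slant helix iff $\sigma_{N_{\beta}}=\dfrac{\varkappa_{\beta}(1+H_{\beta}^{2})^{3/2}}{dH_{\beta}/d\overline{s}}$ is constant. Thus it suffices to express $\sigma_{N_{\beta}}$ through the data of $\alpha$ and check that it is constant exactly when $\sigma_{N}$ is. The first step is to compute the mate's harmonic curvature: feeding the relations \eqref{3-8}--\eqref{3-9} of Theorem~\ref{teo 3.4} together with $\tau_{G\beta}=\tau_{G}$ from Proposition~\ref{prop 3.2} into $H_{\beta}=(\tau_{\beta}-\tau_{G\beta})/\varkappa_{\beta}$ and cancelling the common factor yields
\[
H_{\beta}=\frac{\lambda+\mu H}{\mu-\lambda H},
\]
a M\"{o}bius transformation of $H$ with constant coefficients.

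The constant--coefficient M\"{o}bius form is exactly what makes the invariant transform cleanly, and the two identities I would isolate are
\[
1+H_{\beta}^{2}=\frac{(\lambda^{2}+\mu^{2})(1+H^{2})}{(\mu-\lambda H)^{2}},\qquad \frac{dH_{\beta}}{ds}=\frac{(\lambda^{2}+\mu^{2})\,H^{\prime}}{(\mu-\lambda H)^{2}},
\]
both consequences of a direct differentiation and the identity $(\mu-\lambda H)^{2}+(\lambda+\mu H)^{2}=(\lambda^{2}+\mu^{2})(1+H^{2})$. I would then rewrite the $\overline{s}$--derivative as $\dfrac{dH_{\beta}}{d\overline{s}}=\dfrac{1}{\psi^{\prime}(s)}\dfrac{dH_{\beta}}{ds}$, substitute $\psi^{\prime}=\varkappa H\sqrt{\lambda^{2}+\mu^{2}}$ from \eqref{3-1}, and insert $\varkappa_{\beta}$ (taken as the norm of \eqref{3-6}) into $\sigma_{N_{\beta}}$. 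The powers of $(\mu-\lambda H)$ collected from the three factors ($+1$, $-3$, $+2$) cancel, as do the powers of $(\lambda^{2}+\mu^{2})$ and of $H$, leaving $\sigma_{N_{\beta}}$ equal to $\sigma_{N}$ up to a constant determined by $\lambda,\mu$. Since $\psi$ is a diffeomorphism, constancy in $\overline{s}$ and in $s$ are equivalent, so $\sigma_{N_{\beta}}$ is constant precisely when $\sigma_{N}$ is, which by Theorem~\ref{teo 2.2} gives the desired equivalence.

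I expect the only real obstacle to be the reparametrization bookkeeping: each quantity of $\beta$ lives naturally as a function of $\overline{s}$, and the cancellation only surfaces after every $\overline{s}$--derivative is converted to an $s$--derivative with the correct factor $\psi^{\prime}$ and after $\varkappa_{\beta}$ and $H_{\beta}$ are rewritten in terms of $\varkappa$ and $H$. A quicker route that sidesteps the computation entirely is also available and serves as a check: for a Bertrand couple one has $N_{\beta}=\pm N$ (the norm of \eqref{3-6}, as used in Proposition~\ref{prop 3.2}), so if $\langle N,X\rangle$ is constant for a unit left-invariant field $X$ then $\langle N_{\beta},X\rangle=\pm\langle N,X\rangle$ is constant for the same $X$; by Definition~\ref{tan 2.3} this shows directly that $\alpha$ and $\beta$ are slant helices simultaneously, with the roles symmetric since the Bertrand relation is symmetric.
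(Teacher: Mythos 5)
Your proposal is correct, and its main route is in substance the paper's own proof: the paper likewise forms $\sigma_{N\beta}$ from the curvature relations of Theorem \ref{teo 3.4} together with Proposition \ref{prop 3.2}, asserts $\sigma_{N\beta}=-\sigma_{N}$, and concludes via Theorem \ref{teo 2.2}. What you add are precisely the computations the paper compresses into ``we can easily see'': the M\"{o}bius form $H_{\beta}=(\lambda+\mu H)/(\mu-\lambda H)$, the identities for $1+H_{\beta}^{2}$ and $dH_{\beta}/ds$, and the conversion of $\overline{s}$-derivatives through $\psi^{\prime}=\varkappa H\sqrt{\lambda^{2}+\mu^{2}}$. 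One of your choices is not cosmetic: you take $\varkappa_{\beta}$ from the norm of \eqref{3-6} combined with \eqref{3-1}, which gives $\varkappa_{\beta}=(\mu-\lambda H)/\bigl((\lambda^{2}+\mu^{2})H\bigr)$. The printed \eqref{3-8} and \eqref{3-9} carry a spurious factor of $\varkappa$ (in passing from the display $(\tau_{\beta}-\tau_{G\beta})N_{\beta}=\tfrac{1}{\varkappa H(\lambda^{2}+\mu^{2})}(\lambda\varkappa+\mu\varkappa H)N$ to \eqref{3-9}, the $\varkappa$ in the denominator was dropped), and had you quoted \eqref{3-8} verbatim you would have found $\sigma_{N\beta}=\varkappa\,\sigma_{N}$, whose constancy is not equivalent to that of $\sigma_{N}$; with your version the exponents $(+1,-3,+2)$ of $(\mu-\lambda H)$ cancel as you say, along with the powers of $(\lambda^{2}+\mu^{2})$ and $H$, and one gets $\sigma_{N\beta}=\pm\sigma_{N}$ exactly, which is what the theorem needs (the paper's minus sign versus your plus only reflects the orientation choice $N_{\beta}=\mp N$). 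Finally, your closing argument is a genuinely different proof that the paper does not give: for a Bertrand couple $N_{\beta}=\pm N$ with the sign constant on the connected interval by continuity, so $\langle N_{\beta},X\rangle=\pm\langle N,X\rangle$ for any unit left-invariant field $X$, and Definition \ref{tan 2.3} transfers the slant-helix property directly and symmetrically. That route is shorter, bypasses Theorems \ref{teo 2.2} and \ref{teo 3.4} (and hence the sign and bookkeeping issues) entirely, and is arguably the cleaner proof of the stated equivalence.
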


\begin{proof}
Let $\sigma_{N}$ and $\sigma_{N\beta}$ be the geodesic curvatures of the
principal normal curves of $\alpha$ and $\beta,$ respectively. Then using the
Theorem \ref{teo 3.4} we can easily see that
\[
\sigma_{N\beta}=-\frac{\varkappa(1+H^{2})^{\frac{3}{2}}}{H^{\shortmid}%
}=-\sigma_{N}.
\]
So, with the help of the Theorem \ref{teo 2.2} we complete the proof.
\end{proof}

\begin{theorem}
\label{teo 3.7}Let $\alpha:I\subset \mathbb{R\rightarrow}G$ \ be a parametrized
Bertrand curve \ with curvature functios $\varkappa$, $\tau$ and
$\beta:\overline{I}\subset \mathbb{R\rightarrow}G$ be a Bertrand mate of the
curve $\alpha$ with curvature functions $\varkappa_{\beta},\tau_{\beta}.$ Then
$\alpha$ is a general helix if and only if $\beta$ is a general helix.
\end{theorem}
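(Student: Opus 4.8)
The plan is to reduce everything to the helix characterization already recorded in the preliminaries, namely that a curve is a general helix if and only if its harmonic curvature function is constant (Theorem \ref{teo 2.1}). Thus the entire content of the statement lies in understanding how the harmonic curvature function $H_\beta$ of the mate $\beta$ is related to the harmonic curvature function $H$ of $\alpha$.

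First I would compute $H_\beta=(\tau_\beta-\tau_{G\beta})/\varkappa_\beta$ explicitly in terms of the invariants of $\alpha$. By Proposition \ref{prop 3.2} one has $\tau_{G\beta}=\tau_G$, so the torsion relation \eqref{3-9} collapses to
\[
\tau_\beta-\tau_{G\beta}=\frac{\lambda \varkappa+\mu \varkappa H}{\left(\lambda^2+\mu^2\right)H}.
\]
Dividing this by the curvature relation \eqref{3-8} for $\varkappa_\beta$, the common factor $\varkappa/\bigl(\left(\lambda^2+\mu^2\right)H\bigr)$ cancels and I obtain the clean expression
\[
H_\beta=\frac{\lambda+\mu H}{\mu-\lambda H}.
\]

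The key observation is that this is a fractional linear (M\"obius) transformation of $H$ with the constant coefficients $\lambda,\mu$. Such a map carries constant functions to constant functions, and since it is invertible with inverse $H=(\mu H_\beta-\lambda)/(\mu+\lambda H_\beta)$, the same is true in the reverse direction. Hence $H$ is constant precisely when $H_\beta$ is constant. Feeding this into the helix characterization for both curves yields the chain: $\alpha$ is a general helix, iff $H$ is constant, iff $H_\beta$ is constant, iff $\beta$ is a general helix, which is exactly the asserted equivalence.

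I expect the only delicate point to be the bookkeeping about nonvanishing denominators: the expression for $H_\beta$ requires $\mu-\lambda H\neq0$ and its inverse requires $\mu+\lambda H_\beta\neq0$. Both are automatic on the domain where the Bertrand construction lives, since $\varkappa_\beta\neq0$ forces $\mu-\lambda H\neq0$ through \eqref{3-8}, and symmetrically $\varkappa\neq0$ guards the inverse. Once this is noted, the argument is purely algebraic and requires no differentiation beyond what Theorem \ref{teo 3.4} already supplies.
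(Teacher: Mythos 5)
Your proposal is correct and follows essentially the same route as the paper: both derive the relation $H_\beta=\dfrac{\lambda+\mu H}{\mu-\lambda H}$ (the paper's Eq.~\eqref{3-11}) from Theorem~\ref{teo 3.4} together with Proposition~\ref{prop 3.2}, then invoke the characterization of general helices via constant harmonic curvature (Theorem~\ref{teo 2.1}) in both directions, with the converse handled by inverting this fractional linear relation exactly as the paper does. Your explicit attention to the nonvanishing denominators is a small refinement the paper leaves implicit.
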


\begin{proof}
Let $\alpha$ be a helix. From Theorem \ref{teo 2.1}, we have $H$ is a constant
function. Then using Theorem \ref{teo 3.4}, we get%
\begin{equation}
\frac{\tau_{\beta}-\tau_{G\beta}}{\varkappa_{\beta}}=\frac{\lambda+\mu H}%
{\mu-\lambda H}. \label{3-11}%
\end{equation}
Since $H$ is constant function, the Eq. \eqref{3-11} is constant. So, $\beta$
is a general helix.

Conversly, assume that $\beta$ be a general helix. So, $\frac{\tau_{\beta
}-\tau_{G\beta}}{\varkappa_{\beta}}=$constant. From the Eq. \eqref{3-11}
$c=\frac{\lambda+\mu H}{\mu-\lambda H}=$constant and then $H=\frac
{c\mu-\lambda}{\mu+\lambda c}=$constant. Consequently $\alpha$ is a general
helix. Which completes the proof.
\end{proof}

\end{document}